\documentclass[11pt,twoside]{amsart} 

\usepackage{prefix}

\title{Extendability of $1$-decomposable complexes}

 \author{Rhea Ghosal}
 \address{Westlake High School} 
 \email{grhea1008@gmail.com}
 \author{Melody Han}
 \address{St. John's School} 
 \email{melodyh2727@gmail.com}
 \author{Benjamin Keller}
 \address{Princeton University}
 \email{bk9595@princeton.edu} 
 \author{Scarlett Kerr}
 \address{Byron Nelson High School} 
 \email{scarlettpk333@gmail.com}
 \author{Justin Liu}
 \address{Westwood High School} 
 \email{justinl08182009@gmail.com}
 \author{Suho Oh}
 \address{Texas State University} 
 \email{suhooh@txstate.edu}
 \author{Ryan Tang}
 \address{Lexington High School} 
 \email{tang.ryan8@gmail.com}
  \author{Chloe Weng}
 \address{Clements High School} 
 \email{chloe0909.weng@gmail.com}

\begin{document}
\maketitle

\begin{abstract}
A well-known conjecture of Simon (1994) states that any pure $d$-dimensional shellable complex on $n$ vertices can be extended to $\Delta_{n-1}^{(d)}$, the $d$-skeleton of the $(n-1)$-dimensional simplex, by attaching one facet at a time while maintaining shellability.

The notion of $k$-decomposability for simplicial complexes, which generalizes shellability, was introduced by Provan and Billera (1980). Coleman, Dochtermann, Geist, and Oh (2022) showed that any pure $d$-dimensional $0$-decomposable complex on $n$ vertices can similarly be extended to $\Delta_{n-1}^{(d)}$, attaching one facet at a time while preserving $0$-decomposability.

In this paper, we investigate the analogous question for $1$-decomposable complexes. We prove a slightly relaxed version: any pure $d$-dimensional $1$-decomposable complex on $n$ vertices can be extended to $\Delta_{n + d - 3}^{(d)}$, attaching one facet at a time, while maintaining $1$-decomposability.
\end{abstract}

\section{Introduction}

A pure simplicial complex $\mC$ is termed \emph{shellable} if one can order its facets $F_1, F_2, \dots, F_s$ so that each $F_i$ intersects the preceding facets in a pure complex of codimension one. Shellability serves as a vital combinatorial method with implications for both the topology of $\mC$ and the algebraic properties of its Stanley-Reisner ring. Notable instances of shellable simplicial complexes include independence complexes of matroids \cite{pb80}, boundary complexes of simplicial polytopes \cite{bm72}, and the skeleta of shellable complexes \cite{bw96}. Specifically, for any $k = 1, 2, \dots, n-1$, the $k$-skeleton of a simplex with vertex set $[n]$, denoted as $\Delta_{n-1}^{(k)}$, is shellable.

When working with a shellable complex, one might naturally wonder if it is possible to encounter obstacles while constructing a shelling sequence. A shellable complex $\mC$ is termed \emph{extendably shellable} if every shelling of a subcomplex of $\mC$ can be expanded to form a shelling of $\mC$ itself. In this context, a subcomplex of $\mC$ refers to a simplicial complex where its facets form a subset of the facets of $\mC$.

Shellable complexes are naturally found in various scenarios, but it appears that extendably shellable complexes are less prevalent. Some known examples are: every $2$-dimensional triangulated sphere \cite{dk78}, any $d$-dimensional sphere with $d+3$ vertices \cite{k77}, and rank 3 matroid independence complexes \cite{be94}. However, Ziegler \cite{z98} provided evidence that some simplicial 4-polytopes do not share this attribute. In addition, there are $2$-dimensional complexes on $6$ vertices that are not extendably shellable \cite{mt03}. The main motivation behind a significant part of our research is a question posed by Simon \cite{s94}.

\begin{conjecture}[Simon's Conjecture \cite{s94}]
The complex $\Delta_{n-1}^{(d)}$ is extendably shellable. 
\end{conjecture}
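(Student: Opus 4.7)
The plan is to proceed by strong induction on the number of vertices $n$, with $d$ fixed. The base case $n = d+1$ is trivial (a single facet), and the case $n = d+2$ reduces to the boundary complex of a $(d+1)$-simplex, a $d$-dimensional sphere shown to be extendably shellable in \cite{dk78}. For the inductive step, given a partial shelling $F_1, F_2, \ldots, F_k$ of $\Delta_{n-1}^{(d)}$, I would select a vertex $v \in [n]$ and partition the facets into those not containing $v$ (the \emph{deletion facets}) and those containing $v$ (the \emph{star facets}).

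The first step would verify that the deletion facets, in their original relative order, constitute a partial shelling of $\Delta_{n-2}^{(d)}$ on vertex set $[n] \setminus \{v\}$, and that the link traces $F_i \setminus \{v\}$ of the star facets, in order, constitute a partial shelling of $\Delta_{n-2}^{(d-1)}$. Both statements should follow from standard arguments about restricting shellings to deletions and links, exploiting the purity of $\Delta_{n-2}^{(d)}$ and $\Delta_{n-2}^{(d-1)}$. By the inductive hypothesis applied to each sub-skeleton, both partial shellings extend to full shellings of their respective complexes. The remaining task is to interleave these two extensions into a single shelling of $\Delta_{n-1}^{(d)}$ that begins with the prescribed partial shelling.

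The principal obstacle lies in this interleaving. When inserting a star facet $G \cup \{v\}$ drawn from the extended link shelling, its intersection with all previously placed facets must be a pure subcomplex of codimension one. This intersection receives contributions both from earlier star facets (through shared link faces) and from earlier deletion facets (through faces of $G$ avoiding $v$). Ensuring purity at each step demands a tight synchronization between the two extensions: one must guarantee that by the time $G \cup \{v\}$ is added, the deletion shelling has already placed enough facets whose boundaries collectively cover the portion of $\partial G$ not supplied by prior star facets. A natural but delicate tactic is a round-robin scheduling governed by the restriction faces, inserting star facets exactly when the deletion has "caught up" in an appropriate sense.

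An alternative, more structural route proceeds through $k$-decomposability. The present paper establishes that any $1$-decomposable complex can be extended facet by facet within $\Delta_{n+d-3}^{(d)}$ while preserving $1$-decomposability; the gap to Simon's original conjecture is precisely the $d-2$ extra vertices consumed by that construction. Eliminating that slack requires a sharp pigeonhole statement: at every stage, among the unused facets of $\Delta_{n-1}^{(d)}$, there must exist one whose restriction face is compatible with the partially constructed shelling. Proving such a uniform existence statement is, in my estimation, the deepest step, and is the principal reason the conjecture has resisted direct inductive attack.
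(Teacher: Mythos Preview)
The statement you are attempting is Simon's Conjecture, which the paper explicitly lists as an \emph{open conjecture}; there is no proof in the paper to compare against, and your proposal is accordingly a strategy sketch rather than a proof. You are candid about the interleaving step being unresolved, which is appropriate.

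There is, however, a concrete error earlier in the outline that breaks the induction before the interleaving is even reached. You assert that the deletion facets (those $F_i$ not containing the chosen vertex $v$), taken in their original relative order, form a partial shelling of $\Delta_{n-2}^{(d)}$, and say this ``should follow from standard arguments.'' It does not: links of shellable complexes inherit a shelling in the induced order, but deletions in general do not. For a counterexample, take $d=2$, $n=6$, and the partial shelling
\[
F_1 = 126,\quad F_2 = 123,\quad F_3 = 236,\quad F_4 = 346,\quad F_5 = 345
\]
of $\Delta_5^{(2)}$; one checks directly that each $F_i$ meets the union of its predecessors in a pure $1$-complex. Deleting $v=6$ leaves only $F_2 = 123$ and $F_5 = 345$, in that order, and $F_5 \cap F_2 = \{3\}$ is $0$-dimensional, so the deletion sequence is \emph{not} a partial shelling. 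Since the whole inductive plan rests on applying the hypothesis to both the deletion and the link, this step fails. (A minor aside: the citation for the $n=d+2$ base case is misplaced, as \cite{dk78} concerns only $2$-spheres; but that case is in fact trivial, since any two facets of the boundary of a simplex share a codimension-one face, so every ordering is already a shelling.)
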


Some small cases have been resolved: the $d=2$ case \cite{be94} and the $d \geq n-3$ case \cite{bpz19,d21,cdgs20}. However, this conjecture in general is still wide open. An equivalent formulation of the above conjecture is the following question.

\begin{question}[Simon's Conjecture]
Given any pure $d$-dimensional shellable simplicial complex $\mC$ on $n$ vertices, can we extend $\mC$ to $\Delta_{n-1}^{(d)}$ adding one facet at a time, while maintaining shellability?
\end{question}



From this perspective, it is of interest to find a large class of shellable complexes that we can extend one facet at a time. There is a notion of $k$-decomposable complexes introduced by \cite{pb80} that interpolates between vertex decomposable complexes and shellable complexes: for $k=0$ it is the class of vertex decomposable complexes and for $k=d$ it equals shellability. In \cite{cdgo22}, the extendability analogous to Simon's conjecture for $0$-decomposable complexes was shown.

\begin{theorem}[\cite{cdgo22}]
Given any pure $d$-dimensional $0$-decomposable simplicial complex $\mC$ on $n$ vertices, we can extend $\mC$ to $\Delta_{n-1}^{(d)}$ adding one facet at a time, while maintaining $0$-decomposability.
\end{theorem}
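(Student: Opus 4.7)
The plan is to reduce the claim to a single-step extension lemma: if $\mC$ is a pure $d$-dimensional $0$-decomposable complex on $[n]$ with $\mC \neq \Delta_{n-1}^{(d)}$, then there exists a missing facet $F \in \binom{[n]}{d+1}\setminus\mC$ such that $\mC\cup\{F\}$ is again $0$-decomposable. Iterating this lemma extends $\mC$ to $\Delta_{n-1}^{(d)}$ one facet at a time, as required.

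To prove the lemma, I would induct on $n+d$, handling the simplex and boundary cases $n\le d+1$ directly. First, fix a shedding vertex $v$ of $\mC$, so that $\mathrm{lk}_\mC(v)$ and $\mathrm{del}_\mC(v)$ are both $0$-decomposable and $\mathrm{del}_\mC(v)$ is pure of dimension $d$. I then split on whether $\mathrm{del}_\mC(v)$ has already become the full $d$-skeleton on $[n]\setminus\{v\}$. If not, the inductive hypothesis applied to $\mathrm{del}_\mC(v)$ produces a missing facet $F\subseteq[n]\setminus\{v\}$ with $\mathrm{del}_\mC(v)\cup\{F\}$ still $0$-decomposable, and $v$ remains a shedding vertex of $\mC\cup\{F\}$: the link does not change, the deletion becomes the extended $0$-decomposable complex, and the shedding condition for $v$ is preserved under adding facets that avoid $v$.

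If instead $\mathrm{del}_\mC(v)=\Delta_{[n]\setminus\{v\}}^{(d)}$, let $V\subseteq[n]\setminus\{v\}$ be the vertex set of $\mathrm{lk}_\mC(v)$. When $V=[n]\setminus\{v\}$, induction on $d$ applied to $\mathrm{lk}_\mC(v)$ yields a missing $d$-subset $F'\subseteq V$ with $\mathrm{lk}_\mC(v)\cup\{F'\}$ still $0$-decomposable, and I would adjoin $F=\{v\}\cup F'$; the shedding condition for $v$ then holds because any new $(d-1)$-face $F'$ extends inside the already-full $\mathrm{del}(v)$. The delicate subcase is $V\subsetneq[n]\setminus\{v\}$: pick $u\in([n]\setminus\{v\})\setminus V$, so $\{v,u\}\notin\mC$, and choose a $(d-1)$-element subset $G$ of some facet $H$ of $\mathrm{lk}_\mC(v)$. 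I would adjoin $F=\{v,u\}\cup G$, which is a new facet because it contains the forbidden pair $\{v,u\}$. In the extended complex the shedding vertex switches from $v$ to $u$: $\mathrm{lk}_{\mC\cup F}(u)=\Delta_{[n]\setminus\{u,v\}}^{(d-1)}\cup\{\{v\}\cup G\}$ is $0$-decomposable via $v$ as an internal shedding vertex, $\mathrm{del}_{\mC\cup F}(u)=\mathrm{del}_\mC(u)$ is $0$-decomposable by the same device, and the shedding condition for $u$ is verified by noting that $\{v\}\cup G$ sits inside the existing $\mC$-facet $\{v\}\cup H\in\mathrm{del}_\mC(u)$.

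The main obstacle is this shedding-vertex switch in the last subcase: when $\mathrm{del}_\mC(v)$ is already saturated but $\mathrm{lk}_\mC(v)$ does not touch every vertex, $v$ itself cannot accommodate the new edge $\{v,u\}$, so a fresh shedding vertex $u$ must be produced and the added facet engineered so that the shedding condition for $u$ holds. The remaining work is bookkeeping: purity of the various deletions, applicability of the inductive hypotheses with the correct vertex counts and dimensions, and small-$n$ corner cases such as $n=d+2$.
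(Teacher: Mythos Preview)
The paper does not supply its own proof of this statement: it is quoted as a result of Coleman--Dochtermann--Geist--Oh \cite{cdgo22} and invoked later only through a reference to ``Theorem~2.9 of \cite{cdgo22}'' in the base case of Proposition~\ref{prop:1dectococl}. So there is no in-paper argument to compare your proposal against.

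That said, your outline is a reasonable reconstruction of a direct proof and, as far as I can see, correct once the small cases are handled. A few points worth tightening. In Case~1 you should note that purity of $\del_v\mC$ forces every $w\in[n]\setminus\{v\}$ to lie in some facet of $\del_v\mC$, so the deletion genuinely has $n-1$ vertices and the induction on $n+d$ applies. In subcase~2b your key computation is that $\del_{\mC}(u)$ decomposes as $\Delta_{[n]\setminus\{u,v\}}^{(d)}$ together with all of $\plk_v\mC$ (since $u$ is absent from $\lk_v\mC$), and that $v$ is then a shedding vertex of this complex; this uses $n\ge d+3$ so that each facet $H$ of $\lk_v\mC$ extends to a $(d+1)$-subset of $[n]\setminus\{u,v\}$. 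You flag $n=d+2$ as a corner case, and indeed it must be done separately: there $\Delta_{n-1}^{(d)}$ is the boundary of a simplex, every proper pure subcomplex is a ball, and any missing facet can be added while keeping a shedding vertex, so the step lemma is immediate.
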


As $0$-decomposable complexes are shellable, this result answers Simon's conjecture for the specific class of $0$-decomposable complexes. The purpose of this paper is to study the extendability of $1$-decomposable complexes, a class that strictly contains the $0$-decomposable complexes and is contained in the class of shellable complexes.

\newtheorem*{thm:main}{\cref{thm:main}}
\begin{thm:main}
Given any pure $d$-dimensional $1$-decomposable simplicial complex $\mC$ on $n$ vertices, we can extend $\mC$ to $\Delta_{n+d-3}^{(d)}$ one facet at a time, while maintaining $1$-decomposability.
\end{thm:main}

The remainder of the paper is organized as follows. In Section~2 we recall some necessary definitions and discuss some basic results on $k$-decomposable complexes. In Section~3 we come up with some tools and answer the problem in low-dimensional cases. In Section~4 we prove our main result \cref{thm:main}. We end in Section~5 with some discussion and open questions.

\section{Preliminaries}

\subsection{Basic terminology}
A \newword{simplicial complex} $\mC$ on a finite ground set $V$ is a collection of subsets of $V$ that are closed under taking subsets, so that if $F_1 \in \mC$ and $F_2 \subset F_1$ then $F_2 \in \mC$.  The elements of $\mC$ are called \newword{faces}. Note that we do not require $\{v\} \in \mC$ for all $v\in V$. The elements $v \in V$ such that $\{v\} \in \mC$ are called the \newword{vertices} of $\mC$, while the elements $w \in V$ that are not vertices will be called \newword{loops}. Following the convention in \cite{j08}, \newword{void complex} $\emptyset$ is a simplicial complex, distinct from the \newword{empty complex} $\{\emptyset\}$.

 A \newword{facet} of $\mC$ is an element that is maximal under inclusion. The \newword{dimension} of $\mC$ is the largest cardinality minus $1$ of any facet. A simplicial complex $\mC$ is \newword{pure} if all facets have the same cardinality. We use $\langle F_1, \dots, F_k \rangle$ to denote the simplicial complex with facets $F_1,\dots,F_k$. 


\begin{definition}\label{defn:shellable}
 A pure $d$-dimensional simplicial complex $\mC$ is said to be \newword{shellable} if there exists an ordering of its facets $F_1, F_2, \dots, F_s$ such that for all $k = 2,3, \dots, n$, we have that
\[\left ( \bigcup_{i=1}^{k-1} \langle F_i \rangle \right ) \cap \langle F_ k \rangle\]
\noindent
is pure of dimension $d-1$. By convention, the void complex $\emptyset$ and the empty complex $\{\emptyset\}$ are shellable.  \end{definition}

A shellable complex is connected as long as $d \geq 1$. A pure $1$-dimensional simplicial complex can be thought of as a graph (its facets are the $2$-element subsets of $V$, so they can be thought of as the edges of a graph on $V$). We next recall the operations of link, star, and deletion of a face in a simplicial complex.

\begin{definition}
Suppose $\mC$ is a simplicial complex on ground set $V$ and let $F \in \mC$ be a nonempty face.  The  \newword{link},  \newword{star}, and the \newword{deletion} of $F$ are defined as 
\begin{align*}
\lk_F{\mC} &:= \{G \in \mC : G \cap F = \emptyset, G \cup F \in \mC\}, \\
\plk_F{\mC} &:= \{G \in \mC : F \subset G\}, \\
\del_F{\mC} &:= \{G \in \mC: F \not \subset G\}.
\end{align*}
The ground set of $\plk_F{\mC}$ is $V$, whereas the ground set of $\lk_F{\mC}$ is $V \backslash F$.  If $|F| > 1$, then $\del_F{\mC}$ has ground set $V$, and if $F = \{v\}$ then the ground set is $V \backslash \{v\}$.   
\end{definition}

Next, we discuss the definition of $k$-decomposable complexes.

\begin{definition}[\cite{pb80}]
A pure $d$-dimensional simplicial complex $\mC$ is said to be \newword{$k$-decomposable} if $\mC$ is a simplex, or $\mC$ contains a face $F$ such that
\begin{enumerate}
    \item $\dim(F) \leq k$,
    \item both $\del_F{\mC}$ and $\lk_F{\mC}$ are $k$-decomposable, and
    \item $\del_F{\mC}$ is pure (and the dimensions stay the same as that of $\mC$).
\end{enumerate}
A face $F$ that satisfies the third condition is called a \newword{shedding face} of $\mC$.
\end{definition}

The $0$-decomposable complexes are called \newword{vertex decomposable} complexes. Vertex decomposable complexes were introduced in the pure setting by Provan and Billera \cite{pb80} and extended to non-pure complexes by Bj\"orner and Wachs \cite{bw96}.  It is known that any pure vertex decomposable complex is shellable, a fact implied by the following result of Wachs \cite{w99}. 

From the definition it is clear that $k$-decomposable complexes are $(k+1)$-decomposable as well. The notion of $k$-decomposable interpolates between the notion of vertex decomposable and shellable (which can be seen to coincide with $d$-decomposable).

\begin{example}
Let $\mC = \langle 123,124,125,134,136,245,256,346,356,456 \rangle$ (Example V6F10-6 from \cite{mt03}). In \cite{mt03} it is shown that $\mC$ is not vertex decomposable, but one can check that it is $1$-decomposable using $15$ as a shedding face. 
\end{example}
 
\subsection{Combinatorial tools} There is a purely combinatorial way to check if a face is a shedding face, which we will describe below and use throughout the paper. Given two facets $F_1$ and $F_2$ of a pure $d$-dimensional simplicial complex $\mC$, we say that $F_1$ and $F_2$ are \newword{adjacent} if they differ by one vertex, i.e. $|F_1 \cap F_2| = d$.

\begin{lemma}[Gluing criterion]
\label{lem:shedd}
Suppose $\mC$ is a pure simplicial complex, and let $F$ be a face.  Then $F$ is a shedding face if and only if for any $f \in F$ and any $H \in \plk_F{\mC}$, there exists a facet $H'$ in $\mC$ such that $H \cap H' = H \setminus \{f\}$ (so they are adjacent).
\end{lemma}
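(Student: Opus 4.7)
The plan is to use the characterization that $F$ is a shedding face exactly when $\del_F{\mC}$ is pure of dimension $d$, and in both directions to compare facets of $\del_F{\mC}$ with facets of $\mC$. The key bookkeeping point is that a face of $\mC$ of size $d+1$ is automatically a facet of $\mC$ by purity, so any full-dimensional facet of $\del_F{\mC}$ is actually a facet of $\mC$ that happens not to contain $F$.

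For the forward direction, I would first reduce to the case where $H$ is itself a facet of $\mC$: a general $H \in \plk_F{\mC}$ sits inside some facet $\widetilde H \supseteq F$ of $\mC$, and any $H'$ witnessing the criterion for $(\widetilde H, f)$ automatically satisfies $H \cap H' = H \setminus \{f\}$. So assume $|H| = d+1$. Then $H \setminus \{f\}$ lies in $\del_F{\mC}$ (it omits $f \in F$), and purity of $\del_F{\mC}$ extends it to a facet of $\del_F{\mC}$ of size $d+1$, which is a facet $H'$ of $\mC$ with $F \not\subseteq H'$. The remaining point is that $f \notin H'$: this follows from $|H'| = |H|$ together with $H' \neq H$ (since $F \subseteq H$ but $F \not\subseteq H'$), which forces the unique element of $H'$ outside $H \setminus \{f\}$ to be something other than $f$.

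For the reverse direction, let $G$ be any facet of $\del_F{\mC}$; I need $|G| = d+1$. If $G$ is already a facet of $\mC$, this is immediate from purity of $\mC$. Otherwise $G \subsetneq \widetilde G$ for some facet $\widetilde G$ of $\mC$, and maximality of $G$ in $\del_F{\mC}$ forces $F \subseteq \widetilde G$. Since $F \not\subseteq G$ we may choose $f_0 \in F \setminus G$. Applying the criterion to $\widetilde G \in \plk_F{\mC}$ and $f_0 \in F$ yields a facet $H'$ of $\mC$ with $\widetilde G \cap H' = \widetilde G \setminus \{f_0\}$; in particular $f_0 \notin H'$ so $H' \in \del_F{\mC}$. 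Then $G \subseteq \widetilde G \setminus \{f_0\} \subseteq H'$, and maximality of $G$ gives $G = H'$ and hence $|G| = d+1$.

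The step most likely to need care is the forward direction: purity of $\del_F{\mC}$ only produces an extension of $H \setminus \{f\}$ to a facet of $\mC$ missing \emph{some} element of $F$, and it is the cardinality count that pins $f$ itself down as the element of $H$ omitted from $H'$. In the reverse direction, the analogous subtlety is choosing $f_0$ inside $F \setminus G$ rather than an arbitrary element of $F$, so that the facet produced by the criterion actually contains $G$.
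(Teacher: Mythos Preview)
Your proof is correct and follows essentially the same approach as the paper's: both directions hinge on comparing facets of $\del_F{\mC}$ with facets of $\mC$ containing $F$, using purity. If anything, you are more careful than the paper: you explicitly reduce an arbitrary $H \in \plk_F{\mC}$ to the facet case, you verify the cardinality argument that forces $f \notin H'$ in the forward direction (the paper just says ``there has to be a facet that contains it'' and stops), and in the reverse direction you explicitly choose $f_0 \in F \setminus G$ where the paper leaves this implicit.
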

\begin{proof}
For the forward direction, suppose $F$ is a shedding face of $\mC$. Let $H$ be a facet of $\plk_F{\mC}$ and choose any $f \in F$. Then $H \setminus \{f\}$ is a face of $\del_F{\mC}$. Therefore, there has to be a facet that contains it.

For the other direction, assume for the sake of contradiction that $F$ is not a shedding face. So some facet $J$ of $\del_F{\mC}$ is not a facet of $\mC$. Let $J'$ be the facet of $\mC$ that contains $J$. This $J'$ has to be contained in $\plk_F{\mC}$, so it contains $F$. There is a facet of $\del_F{\mC}$ adjacent to $J'$, which we will denote by $J''$. Then $J''$ strictly contains $J$, which gives us a contradiction.
\end{proof}

The gluing criterion can be used to check if a face is a shedding face. In the remainder of the paper we will always use the gluing criterion instead of checking if $\del_F{\mC}$ is pure.

\begin{definition}
Given a simplicial complex $\mC$ and a set $F$ that has the same dimension as the dimension of $\mC$, we use $\mC + F$ to denote the complex generated by the set of facets of $\mC$ and $F$. Given two simplicial complexes $\mC$ and $\mD$ of equal dimension, we write $\mC \setminus \mD$ as the simplicial complex generated by the facets in $\mC$ but not in $\mD$. When $\mC$ and $\mD$ do not share facets, we write $\mC + \mD$ as the simplicial complex generated by the facets in either of them.
\end{definition}

\begin{remark}
From this point on, we will only deal with pure simplicial complexes, except for $1$-dimensional complexes. Given a simplicial complex of dimension $2$ or higher, unless otherwise stated, it should automatically be assumed that we are dealing with a pure simplicial complex.
\end{remark}

The main reason we consider nonpure $1$-dimensional complexes is because we eventually deal with graphs that can have isolated vertices.

We end the section with the following tools.

\begin{lemma}
\label{lem:1decordering}
Let $\mC$ be a pure $k$-decomposable complex. Then we can order the facets $F_1,\dots,F_t$ such that $\langle F_1,\dots,F_i \rangle$ for $1 \leq i \leq t$ is $k$-decomposable.
\end{lemma}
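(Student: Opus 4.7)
The plan is to proceed by induction on the number of facets $t$ of $\mC$. The base case $t = 1$ is trivial, since $\mC$ is a simplex and hence $k$-decomposable. For the inductive step it suffices to identify a single facet $F_t$ of $\mC$ such that $\mC \setminus \langle F_t \rangle$ is pure and $k$-decomposable; the inductive hypothesis then orders the preceding $t-1$ facets so that every prefix is $k$-decomposable, and appending $F_t$ yields the required ordering. Purity of $\mC \setminus \langle F_t \rangle$ is automatic, so the work is entirely in maintaining $k$-decomposability.

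To choose $F_t$, invoke the definition of $k$-decomposability (since $\mC$ is not a simplex) to obtain a shedding face $F$ with $\dim(F) \leq k$ such that both $\del_F{\mC}$ and $\lk_F{\mC}$ are $k$-decomposable. Let the facets of $\lk_F{\mC}$ be $G_1,\dots,G_\ell$, so that the facets of $\mC$ containing $F$ are exactly $F \cup G_1,\dots,F \cup G_\ell$. Because $\del_F{\mC}$ is pure of the same dimension as $\mC$, at least one facet of $\mC$ avoids $F$, and hence $\ell < t$. The inductive hypothesis applied to $\lk_F{\mC}$ reorders the $G_j$'s so that $\langle G_1,\dots,G_{\ell-1}\rangle$ is $k$-decomposable, and I then set $F_t := F \cup G_\ell$. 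If $\ell = 1$, then $\mC \setminus \langle F_t\rangle = \del_F{\mC}$, already $k$-decomposable by hypothesis. If $\ell \geq 2$, the goal is to verify that $F$ is still a shedding face of $\mC' := \mC \setminus \langle F_t\rangle$, so that $\mC'$ is $k$-decomposable via $F$, with $\del_F{\mC'} = \del_F{\mC}$ and $\lk_F{\mC'} = \langle G_1,\dots,G_{\ell-1}\rangle$, both $k$-decomposable.

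The main obstacle is checking the shedding condition for $F$ in $\mC'$ via the gluing criterion (\cref{lem:shedd}). For any $f \in F$ and any facet $H = F \cup G_i$ of $\plk_F{\mC'}$ (with $i \leq \ell - 1$), the shedding property of $F$ in $\mC$ produces some facet $H'$ of $\mC$ with $H \cap H' = H \setminus \{f\}$; the worry is that $H'$ might be the removed facet $F_t = F \cup G_\ell$, in which case it would not be a facet of $\mC'$. However, this cannot happen: $H \cap F_t \supseteq F$, so in particular $f \in H \cap F_t$, whereas $H \setminus \{f\}$ does not contain $f$. Thus $H'$ is already a facet of $\mC'$, the gluing criterion is satisfied, and $F$ is a shedding face of $\mC'$, completing the induction.
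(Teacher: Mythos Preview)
Your proof is correct. Both you and the paper argue by induction on the number of facets and hinge on the same observation: once a shedding face $F$ is fixed, removing any tail of the link's facet ordering from $\mC$ leaves $F$ a shedding face with $k$-decomposable deletion and link. The paper organizes the recursion differently---it orders $\del_F{\mC}$ and $\plk_F{\mC}$ separately (each by induction) and concatenates, whereas you peel off a single last facet $F_t = F \cup G_\ell$ and recurse on the remainder---but the substance is the same. Your version is actually more explicit: the paper's proof is terse and leaves the verification that $F$ remains a shedding face at every intermediate stage to the reader (this is essentially the content of the subsequent \cref{lem:cheatcheckshedding}), while you carry out that check directly via the gluing criterion, with the nice observation that the witness $H'$ cannot be the removed facet $F_t$ since $f \in F \subseteq H \cap F_t$ forces $H \cap F_t \neq H \setminus \{f\}$.
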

\begin{proof}
We use induction on the number of facets of $\mC$. The base case is where $\mC$ has exactly one facet, in which case the argument is trivial. Now assume for the sake of induction that the claim holds for complexes that have a smaller number of facets than $\mC$.

Let $F$ be the shedding face of $\mC$. From the induction hypothesis, we can order the facets in $\del_F{\mC}$. We can also order the facets in $\plk_F{\mC}$. Then we combine the orders by going over the facets of $\del_F{\mC}$ first.
\end{proof}

\begin{example}
    Consider the pure simplicial complex \[\langle 123, 234, 134, 135, 145, 245\rangle.\]
    We have $45$ as a shedding face, so we put the facets $\{123, 234, 134,135\}$ first, then put the facets $\{145,245\}$. One can check that $\langle 123 \rangle, \dots, \langle 123,234, 134,135,145,245 \rangle$ are all $1$-decomposable.  
\end{example}

As can be observed from the gluing criterion \cref{lem:shedd}, when $F$ is a shedding face of $\mC$, after adding more facets that do not contain $F$, the face $F$ is still a shedding face. 

\begin{lemma}
\label{lem:cheatcheckshedding}
Let $\mC$ be a pure $k$-decomposable complex such that $F$ is either a shedding face or not a face of $\mC$ at all. Let $F_1,\dots,F_t$ be facets not in $\mC$ that do not contain $F$ (we do allow $t=0$, which corresponds to such set being empty) and let $F_{t+1},\dots,F_q$ be facets not in $\mC$ that contain $F$. If $\mC + \langle F_1,\dots,F_q \rangle$ has $F$ as a shedding face, then $F$ is a shedding face in $\mC + \langle F_1,\dots,F_i \rangle$ for any $1 \leq i \leq q$.
\end{lemma}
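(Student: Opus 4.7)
The plan is to verify the gluing criterion (\cref{lem:shedd}) directly in each intermediate complex. Set $\mD_i := \mC + \langle F_1,\dots,F_i\rangle$, so that $\mD_q$ is the full complex whose shedding property we assume, and $\mD_0 = \mC$. Because every $F_j$ has the same dimension as the facets of $\mC$ and is not already a facet of $\mC$, the facet set of $\mD_i$ is precisely the facet set of $\mC$ together with $\{F_1,\dots,F_i\}$; in particular every facet of $\mD_i$ is also a facet of $\mD_q$.

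Now fix $i\in\{1,\dots,q\}$. If $F$ is not a face of $\mD_i$ (which can only occur if $F$ is not a face of $\mC$ and $i\le t$, since then none of the added facets contain $F$), the gluing criterion is vacuously satisfied. Otherwise, take any facet $H$ of $\mD_i$ with $F\subseteq H$ and any $f\in F$; we must produce a facet $H'$ of $\mD_i$ with $H\cap H' = H\setminus\{f\}$. Split on the origin of $H$. If $H$ is already a facet of $\mC$, then $F\subseteq H$ forces $F$ to be a face of $\mC$, hence by hypothesis a shedding face of $\mC$; applying \cref{lem:shedd} inside $\mC$ yields such an $H'\in\mC\subseteq\mD_i$. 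If instead $H=F_j$ for some $t+1\le j\le i$, then $H$ is a facet of $\mD_q$ containing $F$, so the shedding-ness of $F$ in $\mD_q$ provides a facet $H'$ of $\mD_q$ with $H\cap H' = H\setminus\{f\}$.

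The one step that could go wrong -- and so the crux of the argument -- is showing that this $H'$ actually lies in the smaller complex $\mD_i$, not only in $\mD_q$. Here one uses the fact that $f\in F\subseteq H$ but $f\notin H'$, which forces $F\not\subseteq H'$. Thus $H'$ is a facet of $\mD_q$ that does not contain $F$, so it is either a facet of $\mC$ or one of $F_1,\dots,F_t$. Since we are in the case $i\ge t+1$, the set $\{F_1,\dots,F_t\}$ is contained in $\{F_1,\dots,F_i\}$, and therefore $H'$ is a facet of $\mD_i$, as required. This verifies the gluing criterion for every $i$, so $F$ is a shedding face of each $\mD_i$.
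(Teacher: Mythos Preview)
Your proof is correct and follows essentially the same approach as the paper: both arguments verify the gluing criterion (\cref{lem:shedd}) and hinge on the observation that any witness facet $H'$ satisfies $F\not\subseteq H'$, so it already lies among $\mC\cup\{F_1,\dots,F_t\}\subseteq\mD_i$. Your version is simply more explicit about the case split and about why the witness from $\mD_q$ descends to $\mD_i$, whereas the paper compresses this into a single sentence.
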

\begin{proof}
As mentioned in the paragraph just before the statement of this lemma, adding facets that do not contain $F$ to $\mC$ does not affect $F$ being a shedding face. If $\mC + \langle F_1,\dots,F_q\rangle$ has $F$ as a shedding face, that means all facets $F_{t+1},\dots,F_q$ satisfy the gluing criterion \cref{lem:shedd}, so $F$ is a shedding face in $\mC + \langle F_1,\dots,F_i \rangle$ for any $i$.
\end{proof}

The above lemma tells us the following. When we extend a complex incrementally one facet at a time, as long as we put the facets that avoid $F$ first, we can check whether $F$ was a shedding face throughout the entire process just by checking it on the final product.

\section{Low-dimensional cases}

In this section, we develop some tools that we need to prove the main result. Most of them will be on how $1$-decomposable complexes in dimension $1$ or $2$ behave. 

We start with a lemma that allows us to relate the properties of $\plk_F{\mC}$ and $\lk_F{\mC}$.

\begin{lemma}
\label{lem:pad}
Let $\mC = \langle F_1, \dots, F_t \rangle$ be a pure simplicial complex and let $H$ be a set disjoint from the vertex set of $\mC$. Set $\mD = \langle F_1 \cup H, \dots, F_t \cup H \rangle$. Then $\mC$ is $k$-decomposable if and only if $\mD$ is, and $F$ is a shedding face of $\mC$ if and only if $F$ is a shedding face of $\mD$.
\end{lemma}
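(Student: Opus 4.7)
The plan is to first establish the shedding-face equivalence directly from the gluing criterion \cref{lem:shedd}, and then deduce the $k$-decomposability equivalence by induction on the number $t$ of facets of $\mC$ (equivalently, of $\mD$).

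For the shedding-face part, note that if $F$ is a face of $\mC$ then $F \cap H = \emptyset$. A facet of $\mD$ contains $F$ precisely when it has the form $F_i \cup H$ with $F \subseteq F_i$. For any $f \in F$, two such facets $F_i \cup H$ and $F_j \cup H$ of $\mD$ satisfy $(F_i \cup H) \cap (F_j \cup H) = (F_i \cup H) \setminus \{f\}$ if and only if $F_i \cap F_j = F_i \setminus \{f\}$, because $H$ is common to both and disjoint from the vertex set of $\mC$. Hence the gluing criterion for $F$ in $\mD$ reads verbatim as the gluing criterion for $F$ in $\mC$.

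For the $k$-decomposability equivalence, I would induct on $t$. The base case $t = 1$ is immediate, as both $\mC$ and $\mD$ are single simplices. The key observation for the inductive step is that for any face $F$ of $\mC$, the complex $\del_F{\mD}$ (respectively $\lk_F{\mD}$) is exactly the $H$-padding of $\del_F{\mC}$ (respectively $\lk_F{\mC}$), and whenever $F$ is a shedding face of $\mC$ both of these complexes have strictly fewer than $t$ facets. The forward direction is then automatic: given a shedding face $F$ of $\mC$ with $\dim F \leq k$, the first part shows $F$ is also a shedding face of $\mD$ with the same dimension, and the inductive hypothesis applied to $\del_F$ and $\lk_F$ upgrades $k$-decomposability from $\mC$ to $\mD$.

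The reverse direction is where the only real subtlety lies: a shedding face $F'$ of $\mD$ could a priori involve vertices from $H$. However, every facet of $\mD$ contains $H$, so for any $f \in F' \cap H$ no facet of $\mD$ is missing $f$ and the gluing criterion fails at that $f$. Hence every shedding face of $\mD$ lies in the vertex set of $\mC$, is a face of $\mC$, and by the first part is a shedding face of $\mC$; the inductive hypothesis then transfers $k$-decomposability of $\del_{F'}{\mD}$ and $\lk_{F'}{\mD}$ down to $\del_{F'}{\mC}$ and $\lk_{F'}{\mC}$. I expect this observation --- that shedding faces of $\mD$ cannot use vertices of $H$ --- to be the one nontrivial bookkeeping step in an otherwise routine induction.
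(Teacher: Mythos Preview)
Your proposal is correct and follows the same induction-on-$t$ scheme as the paper, using the gluing criterion for the shedding-face equivalence and the padding compatibility of $\del_F$ and $\lk_F$ for the inductive step. In fact you are more careful than the paper on one point: the paper's proof tacitly assumes that a shedding face witnessing $k$-decomposability of $\mD$ already lies in $\mC$, whereas you explicitly argue (correctly) that no shedding face of $\mD$ can meet $H$ because every facet of $\mD$ contains all of $H$ and the gluing criterion would fail at any $f\in H$.
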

\begin{proof}
We use induction on the number of facets of $\mC$. In the base case where there is only one facet, the argument is trivial. Now assume for the sake of induction that the claim holds for complexes with fewer facets than $\mC$.

Using the gluing criterion \cref{lem:shedd}, we can see that $F$ is a shedding face of $\mC$ if and only if $F$ is a shedding face of $\mD$. Moreover, the induction hypothesis shows that $\del_F{\mC}$ is $k$-decomposable if and only if $\del_F{\mD}$ is, and a similar argument holds for the link as well. This finishes the proof.
\end{proof}

Hence, whenever we are in a scenario where we have to show that $\plk_F{\mC}$ is $1$-decomposable, we can use $\lk_F{\mC}$ instead and vice versa. 

\begin{example}
Consider the pure simplicial complex
$$\langle 1234,2345,1237,1238,2348,2358,2378,1235,2347,2357 \rangle.$$ To check whether this is $1$-decomposable, we can instead check whether
$$\langle 14,45,17,18,48,58,78,15,47,57 \rangle$$ is $1$-decomposable or not. Since this is a connected graph, \cref{lem:1dconn} will show us that it is indeed $1$-decomposable.
\end{example}

Given a pure simplicial complex $\mC$, we will use $\skel(\mC)$ to denote its \newword{$1$-dimensional skeleton}: the simplicial complex generated by all $1$-dimensional faces of $\mC$. Since this is a $1$-dimensional complex, we can treat it as a graph. 

\begin{lemma}[Section 7.2, \cite{bj92}]
\label{lem:skelconn}
Let $\mC$ be a pure simplicial complex that is shellable. Then $\skel(\mC)$ is a connected graph. 
\end{lemma}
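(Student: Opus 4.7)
The plan is a straightforward induction on the length of a shelling. Fix a shelling $F_1, \ldots, F_s$ of $\mC$ and write $\mC_k := \langle F_1, \ldots, F_k \rangle$. I would prove by induction on $k$ that $\skel(\mC_k)$ is connected, which, when $k = s$, yields the lemma. Throughout, we may assume $d := \dim(\mC) \geq 1$, since otherwise $\mC$ is just a collection of isolated vertices and the statement must be interpreted separately.

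For the base case $k = 1$, $\mC_1 = \langle F_1 \rangle$ is a single $d$-simplex, so $\skel(\mC_1)$ is the complete graph on $d+1$ vertices and hence connected. For the inductive step, suppose $\skel(\mC_{k-1})$ is connected. The shelling condition guarantees that $\langle F_k \rangle \cap \mC_{k-1}$ is pure of dimension $d - 1 \geq 0$, so in particular it contains a $(d-1)$-face, and therefore at least one vertex $v$. Every other vertex $u \in F_k$ is joined to $v$ by an edge of $\skel(\langle F_k \rangle) \subseteq \skel(\mC_k)$, because a simplex has a complete $1$-skeleton. On the other hand, $v$ is already a vertex of $\mC_{k-1}$, so by the induction hypothesis it is connected in $\skel(\mC_{k-1})$ to every other vertex of $\mC_{k-1}$. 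Combining these two observations, every vertex of $\mC_k$ lies in the connected component of $v$ in $\skel(\mC_k)$, completing the induction.

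The only technical wrinkle is ensuring that $\langle F_k \rangle \cap \mC_{k-1}$ actually contains a vertex, and this is precisely what the purity clause in \cref{defn:shellable} provides once $d \geq 1$. I do not anticipate any real obstacle beyond this observation, so the argument should fit comfortably into a short paragraph.
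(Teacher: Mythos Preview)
Your argument is correct. The paper does not actually supply its own proof of this lemma; it simply cites \cite{bj92} (Section~7.2) for the statement. The inductive argument you outline---adding one facet at a time along a shelling and observing that the new facet shares at least one vertex with the earlier subcomplex once $d\ge 1$---is the standard proof and goes through without difficulty. Your remark that the $d=0$ case must be set aside is also appropriate, since $\skel(\mC)$ is only meaningful as a graph when $\mC$ has $1$-dimensional faces.
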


We start with a simple observation on how the skeleton changes as we take the deletion or the link of a shedding face. 

\begin{lemma}
\label{lem:skeldecompanal}
Let $\mC$ be a pure simplicial complex with $F$ as a shedding face. 

When $F = w$, we have:
\begin{itemize}
    \item $\skel(\del_F{\mC})$ is the graph obtained from $\skel(\mC)$ by removing the vertex $w$ and all edges incident to it.
    \item If the dimension of $\lk_F{\mC}$ is nonzero, then $\skel(\lk_F{\mC})$ is a subgraph (with the same vertex set) of the induced subgraph of $\skel(\mC)$ on the vertices adjacent to $w$.
\end{itemize}

When $F = ab$, we have:
\begin{itemize}
    \item $\skel(\del_F{\mC})$ is the graph obtained from $\skel(\mC)$ by removing only the edge $ab$.
    \item If the dimension of $\lk_F{\mC}$ is nonzero, then $\skel(\lk_F{\mC})$ is a subgraph (whose vertex set may be strictly smaller) of the induced subgraph of $\skel(\mC)$ on the vertices adjacent to both $a$ and $b$.
\end{itemize}
\end{lemma}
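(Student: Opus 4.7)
The plan is to verify each of the four bullets by directly unpacking the definitions of $\del$, $\lk$, and $\skel$, invoking the gluing criterion \cref{lem:shedd} only where the shedding hypothesis is actually needed. For the vertex case $F = \{w\}$, the first bullet is almost formal: an edge $\{x,y\}$ lies in $\del_w\mC$ iff $\{x,y\} \in \mC$ and $w \notin \{x,y\}$, so the edges of $\skel(\del_w\mC)$ are precisely the edges of $\skel(\mC)$ not incident to $w$. The one point that is not a pure definition chase is checking that no vertex $v \neq w$ is lost. Here I would apply \cref{lem:shedd}: given any facet $H \in \plk_w\mC$ containing $v$, the gluing criterion produces a facet $H'$ of $\mC$ with $H \cap H' = H \setminus \{w\}$, so $v \in H'$ and $w \notin H'$, which shows $v$ lies in a facet of $\del_w\mC$ and (since $\dim\mC \geq 1$) is an endpoint of an edge of $\skel(\del_w\mC)$.

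The remaining three bullets follow purely from the definitions. For the link in the vertex case, $\{x,y\} \in \lk_w\mC$ iff $\{x,y,w\} \in \mC$; this forces $\{x,y\}, \{x,w\}, \{y,w\} \in \mC$, so the edge lies in $\skel(\mC)$ and both endpoints are neighbors of $w$, while the vertex set of $\skel(\lk_w\mC)$ equals $\{v : \{v,w\} \in \mC\}$, matching the vertex set of the induced subgraph on neighbors of $w$. For the edge case $F = \{a,b\}$, a face of $\mC$ contains both $a$ and $b$ iff it contains the edge $\{a,b\}$; since the only edge containing both $a$ and $b$ is $\{a,b\}$ itself, the deletion removes exactly the edge $ab$ from $\skel(\mC)$, and all vertices of $\mC$ survive because singletons avoid the set $\{a,b\}$. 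Finally, $\{x,y\} \in \lk_{ab}\mC$ iff $\{x,y,a,b\} \in \mC$, which immediately forces $\{x,y\} \in \mC$ and makes each of $x,y$ adjacent to both $a$ and $b$; the vertex set may strictly shrink because a common neighbor $v$ of $a$ and $b$ in $\skel(\mC)$ need not satisfy $\{v,a,b\} \in \mC$, so such a $v$ would be absent from $\lk_{ab}\mC$.

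Thus the entire proof is bookkeeping with the definitions of deletion, link, and skeleton, and the only substantive step is the vertex preservation in the first bullet. This is precisely the place where the shedding property matters: without it, one could imagine a vertex $v \neq w$ supported only on facets through $w$, which would then disappear upon deletion. The gluing criterion rules this out by providing a $w$-avoiding facet through every such $v$. I expect this to be the single nontrivial moment in an otherwise definitional argument.
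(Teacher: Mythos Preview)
Your approach is essentially the same as the paper's: unpack the definitions of $\del$, $\lk$, and $\skel$, and invoke the gluing criterion for the deletion bullets. The paper treats both deletion cases uniformly (take any $J$ in $\skel(\mC)$ with $F \not\subset J$, find a facet of $\del_F\mC$ containing it, using \cref{lem:shedd} when the obvious facet lies in $\plk_F\mC$), whereas you separate the vertex and edge cases; both are fine.

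One small correction: you say the edge-deletion bullet is purely definitional and that the shedding hypothesis is needed only in the first bullet. That is not quite right. Showing that $a$ and $b$ themselves remain vertices of $\skel(\del_{ab}\mC)$ is not a consequence of ``singletons avoid $\{a,b\}$'': the skeleton is generated by the $1$-faces, so you need an edge of $\del_{ab}\mC$ through $a$ (and through $b$). This is exactly where shedding enters again---either via purity of $\del_{ab}\mC$ (so $\{a\}$ extends to a full-dimensional facet avoiding $ab$) or, as the paper does, by applying the gluing criterion with $f=b$ to any facet of $\plk_{ab}\mC$. So the ``substantive step'' you isolate in bullet one has a twin in bullet three; once you patch that, your argument matches the paper's.
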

\begin{proof}
Take any $J \not \subseteq F$ in $\skel(\mC)$. There is a facet $H$ that contains it in $\mC$. If $H \in \del_F{\mC}$, then $J$ is contained in $\skel(\del_F{\mC})$ from the definition of a $1$-skeleton. Otherwise, we have $H \in \plk_F{\mC}$, and from the gluing criterion \cref{lem:shedd} we have some adjacent facet in $\del_F{\mC}$ that contains $J$, so again we have $J$ contained in $\skel(\del_F{\mC})$.

When $F=w$, then all edges with $w$ as an endpoint come only from the facets of $\plk_F{\mC}$. Hence $\skel(\lk_F{\mC})$ has the same vertex set as the induced subgraph of $\skel(\mC)$ on vertices adjacent to $w$.

When $F=ab$, some edges with $a$ or $b$ as an endpoint may appear in $\skel(\mC)$ but not in $\skel(\lk_F{\mC})$: this happens when there is a facet containing $ax$ and a facet containing $bx$ but no facet containing $abx$ in $\mC$.
\end{proof}

\begin{example}
    Consider a $1$-decomposable simplicial complex and its skeleton 
    \begin{align*}
    \mC &= \langle 123, 134, 145, 345\rangle, \\ 
    \skel(\mC) &= \{12, 13, 14, 15, 23, 34, 35, 45\}.
    \end{align*}
    We have a shedding face $F = 5$. Then we get 
    \begin{align*}
    \del_F{\mC} &= \langle123, 134\rangle, \\ 
    \skel(\del_F{\mC}) &= \{12, 13, 14, 23, 34\}, \\
    \lk_F{\mC} &= \langle14, 34\rangle, \\ 
    \skel(\lk_F{\mC}) &= \{14, 34\}.
    \end{align*}
\end{example}

Next, we show a criterion that simplifies the process of checking whether a given $1$-dimensional complex is $1$-decomposable or not.

\begin{lemma}
\label{lem:1dconn}
Let $\mC$ be a pure $1$-dimensional simplicial complex. Then $\mC$ is $1$-decomposable if and only if $\skel(\mC)$ (which is equal to $\mC$) is a connected graph.
\end{lemma}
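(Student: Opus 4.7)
The plan is to handle the two directions separately, with the forward direction being essentially automatic and the backward direction proceeding by induction on the number of edges.

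For the forward direction, I would use the fact noted earlier in the preliminaries that $k$-decomposable complexes are shellable (shellability corresponds to $d$-decomposability). Then by \cref{lem:skelconn}, $\skel(\mC)$ is connected, and since $\mC$ is $1$-dimensional we have $\mC = \skel(\mC)$, so $\mC$ is connected.

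For the backward direction, I would induct on the number of edges of $\mC$. The base case of a single edge is a $1$-simplex, hence $1$-decomposable by definition. For the inductive step, the goal is to exhibit a vertex $v$ to use as a shedding face, and I would split into two cases depending on whether $\mC$ has a leaf. In the leaf case, if $v$ has degree $1$ with unique neighbor $u$, then either $u$ also has degree $1$ (so $\mC$ is a single edge, already handled) or $u$ has degree $\geq 2$. In the latter situation $\del_v{\mC}$ is the connected graph obtained by removing $v$ and the edge $uv$, and it is pure $1$-dimensional because $u$ still has incident edges. The gluing criterion \cref{lem:shedd} is verified by taking any other edge at $u$ as the required adjacent facet, and the link $\lk_v{\mC} = \langle u \rangle$ is a $0$-simplex, hence $1$-decomposable. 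In the no-leaf case, every vertex has degree $\geq 2$; I would pick a non-cut vertex $v$ of the connected graph $\mC$ (which exists in any connected graph on at least two vertices, e.g.\ by taking a leaf of a spanning tree), so $\del_v{\mC}$ stays connected. Because every neighbor of $v$ has degree $\geq 2$, removing $v$ leaves no isolated vertex, so $\del_v{\mC}$ is pure; the gluing criterion is satisfied for the same reason. The link $\lk_v{\mC}$ is a $0$-dimensional complex on the neighbors of $v$.

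Two small auxiliary facts are needed along the way. First, every connected graph on at least two vertices admits a non-cut vertex, which is standard. Second, every pure $0$-dimensional complex is $1$-decomposable; this follows by a straightforward induction, as a single-vertex complex is a simplex and otherwise one may pick any vertex as a shedding face (its deletion is again pure $0$-dimensional, and its link is empty). Applying the inductive hypothesis to $\del_v{\mC}$ in each case then completes the argument.

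The main obstacle is keeping track of the purity constraint on $\del_v{\mC}$ throughout, since without the degree conditions described above a deletion could introduce an isolated vertex and ruin purity. The leaf/no-leaf dichotomy is designed precisely to guarantee that every neighbor of the chosen shedding vertex retains at least one incident edge after deletion.
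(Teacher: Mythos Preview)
Your argument is correct, but it takes a different route from the paper. The paper dispatches the lemma in one line by observing that for a $1$-dimensional complex, $1$-decomposability coincides with $d$-decomposability, which in turn coincides with shellability; it then cites the classical fact that a graph is shellable if and only if it is connected. Your forward direction is essentially the same (you use \cref{lem:skelconn} rather than the direct citation, but this is equivalent). Your backward direction, however, is a genuine direct proof: instead of invoking the shellability characterization, you explicitly produce a shedding vertex via the leaf/non-cut-vertex dichotomy and verify the gluing criterion and the $1$-decomposability of the link by hand. This buys self-containment at the cost of some length; the paper's version buys brevity by outsourcing the backward implication to a known reference. Both are valid, and your case analysis handling purity of the deletion is clean.
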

\begin{proof}
Recall that $d$-decomposability for $d$-dimensional complexes is equivalent to shellability. A graph is shellable if and only if it is connected [Problem 34, \cite{kp03}], which concludes the proof.
\end{proof}

From the lemma, given a pure $1$-dimensional $1$-decomposable complex, it is simple to find a facet to add and keep it $1$-decomposable: find any edge that preserves the connectivity of the graph after adding it. For $2$-dimensional cases, we have the following.

\begin{lemma}
\label{lem:2dk3ext}
Consider a pure $2$-dimensional $1$-decomposable complex $\mC$. If there is an edge $e$ missing from the graph $\skel(\mC)$, but it creates a $3$-clique if we add it to the graph, then we can find some facet $H$ such that $\mC + H$ is $1$-decomposable and $\skel(\mC+H) = \skel(\mC) + e$.
\end{lemma}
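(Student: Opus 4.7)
The plan is to take the facet $H = \{a,b,c\}$, where $c$ is any vertex completing $e = ab$ to a $3$-clique in $\skel(\mC) + e$ (such a $c$ exists by hypothesis, and necessarily $c \neq a, b$ since $ac, bc \in \skel(\mC)$ are edges), and then to exhibit $ab$ as a shedding face certifying that $\mC + H$ is $1$-decomposable.

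First I would verify that $\skel(\mC + H) = \skel(\mC) + e$: the three edges of $H$ are $ab$, $ac$, $bc$, and by construction $ac$ and $bc$ already lie in $\skel(\mC)$, so only the edge $e = ab$ is genuinely new. In particular $H$ is not a facet of $\mC$, since otherwise $ab$ would already be an edge of $\skel(\mC)$.

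Second, I would apply the gluing criterion (\cref{lem:shedd}) to show that $ab$ is a shedding face of $\mC + H$. Because $ab \notin \skel(\mC)$, the only facet of $\mC + H$ containing $ab$ is $H$ itself, so $\plk_{ab}{(\mC + H)} = \langle abc \rangle$. The criterion requires, for each $f \in \{a, b\}$, a facet of $\mC + H$ intersecting $H$ in exactly $H \setminus \{f\}$. For $f = a$, we need a facet containing $bc$ and distinct from $H$, which exists in $\mC$ because $bc \in \skel(\mC)$; the case $f = b$ is symmetric.

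Third, I would read off the deletion and the link to conclude. Since $H$ is the unique facet of $\mC + H$ containing $ab$, we have $\del_{ab}{(\mC + H)} = \mC$, which is $1$-decomposable by hypothesis; and $\lk_{ab}{(\mC + H)} = \langle \{c\} \rangle$ is a $0$-simplex, hence trivially $1$-decomposable. Together these give that $\mC + H$ is $1$-decomposable with shedding face $ab$. The key observation making the argument essentially obstacle-free is that the newly created edge $e$ is itself a valid shedding face for the extended complex, which trivializes both the gluing check and the recursive $1$-decomposability conditions.
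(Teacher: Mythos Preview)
Your proposal is correct and follows the same approach as the paper: take $H=abc$ for a vertex $c$ completing the new $3$-clique, and use $e=ab$ as the shedding face, checking the gluing criterion via the facets of $\mC$ containing $ac$ and $bc$, with $\del_{ab}(\mC+H)=\mC$ and $\lk_{ab}(\mC+H)$ being $0$-dimensional. You are somewhat more explicit than the paper in verifying that $H\notin\mC$ and that $\skel(\mC+H)=\skel(\mC)+e$, but the argument is otherwise identical.
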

\begin{proof}
Let $e=ab$ be the edge we add that forms a newly created $3$-clique $abc$ in $\skel(\mC) + e$. We are going to use $H=abc$ to add to $\mC$, using $e$ as the shedding face.

As $ac,bc$ are the edges of $\skel(\mC)$, there is a facet of $\mC$ that contains $ac$ and a facet that contains $bc$. Since these facets are adjacent to $abc$, we conclude that $e$ is a shedding face from \cref{lem:shedd}. 

We have $\del_e(\mC+H) = \mC$ is $1$-decomposable. We see that $\lk_e(\mC+H)$ is $1$-decomposable since it is $0$-dimensional. This concludes the proof that $\mC + H$ is $1$-decomposable.
\end{proof}

The above lemma gives us a way to expand the skeleton of a $2$-dimensional $1$-decomposable complex while maintaining $1$-decomposability. 

\begin{example}
Consider $\mC = \langle 123,134,145 \rangle$, a $1$-decomposable complex that has skeleton $\{12,13,14,15,23,34,45\}$. Then $25$ is an edge that is missing and forms a $3$-clique $125$ if added in. Thus we can create $\mC + H = \langle 123,134,145,125 \rangle$ that is still $1$-decomposable and has skeleton $\{12,13,14,15,23,25,34,45\}$.
\end{example}

Next, we define an operator we will be using. 
\begin{definition}
\label{def:full}
Given a graph $G$, $\full^{d}(G)$ is a pure $d$-dimensional simplicial complex generated by all $(d+1)$-cliques of $G$. Given a pure simplicial complex $\mC$, we let $\full^{d}(\mC)$ stand for $\full^{d}(\skel(\mC))$.
\end{definition}

When we omit $d$ and just write $\full(\skel(\mC))$, it means that we are taking $\full^d(\skel(\mC))$ when $\mC$ is a $d$-dimensional complex.

\begin{example}
Take $\mC = \langle 123,134,124 \rangle$. Then we have $\skel(\mC) = \{12,13,14,23,24,34\}$. Hence, $\full(\mC) = \full^2(\mC) = \langle 123,134,124,234 \rangle$.
\end{example}

Note that given a pure $1$-dimensional complex, it is already equal to its full, as the complex itself is equal to its skeleton as a graph. For a pure $2$-dimensional complex, we show in the following that one can always add a facet one by one while maintaining $1$-decomposability and its skeleton (unless the original complex is already equal to its full).

\begin{lemma}
\label{lem:2dfull}
Consider $\mC$ a pure $2$-dimensional $1$-decomposable complex. It is possible to extend $\mC$ to $\full(\mC)$ incrementally by adding one facet at a time, while ensuring that it remains $1$-decomposable. 
\end{lemma}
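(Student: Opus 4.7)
The plan is to induct on the number of facets of $\mC$, with trivial base case when $\mC$ is a simplex so that $\full(\mC)=\mC$. For the inductive step, we fix a shedding face $F$ of $\mC$; by the definition of $1$-decomposability, $|F|\in\{1,2\}$. In both sub-cases the strategy is the same: extend $\mC$ in two phases, first by adding the missing $3$-cliques of $\skel(\mC)$ that avoid $F$, and then by adding those that contain $F$.

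Suppose $F=\{v\}$. The gluing criterion (\cref{lem:shedd}) forces the facets of $\del_v\mC$ to be exactly the facets of $\mC$ that avoid $v$, so $|\del_v\mC|<|\mC|$, and the inductive hypothesis yields a sequence $T_1,\dots,T_k$ of $3$-cliques, none containing $v$, extending $\del_v\mC$ to $\full(\del_v\mC)$ while preserving $1$-decomposability at every intermediate step. We plan to inject the same sequence into $\mC$: each intermediate complex $\mC+\langle T_1,\dots,T_i\rangle$ should be $1$-decomposable with $v$ still a shedding face, because $\del_v$ tracks the inductive sequence and is $1$-decomposable, the link $\lk_v$ is unchanged and hence still $1$-decomposable, and the gluing data at $v$ is inherited unchanged from $\mC$. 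After this phase, every missing $3$-clique of $\full(\mC)$ must contain $v$, say $vab$. Adding each such $vab$ one at a time should keep $v$ shedding: $\del_v$ is unchanged, $\lk_v$ gains the single edge $ab$ between its already-present vertices $a,b$ (present because $va,vb\in\skel(\mC)$) and stays connected, hence $1$-decomposable by \cref{lem:1dconn}; the gluing criterion at $v$ for the new facet will be witnessed by facets of $\mC$ containing $ab$, $va$, and $vb$ respectively, which are distinct from $vab$ because $vab\notin\mC$.

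The case $F=ab$ is parallel. Induction extends $\del_{ab}\mC$ (which has strictly fewer facets than $\mC$) to $\full(\del_{ab}\mC)$ via a sequence $S_1,\dots,S_k$ of $3$-cliques not involving the edge $ab$; injecting these into $\mC$ should preserve $1$-decomposability with $ab$ as a shedding face, because $\lk_{ab}$ is $0$-dimensional (thus automatically $1$-decomposable) and neither $\del_{ab}$ nor the gluing data at $ab$ changes in a problematic way. The remaining missing $3$-cliques are all of the form $abc$; adding each one by one should keep $ab$ shedding since $\lk_{ab}$ merely acquires an isolated new vertex $c$, and the gluing criterion at $ab$ for $abc$ follows from $ac,bc\in\skel(\mC)$ together with $abc\notin\mC$.

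The part we expect to be the main obstacle is verifying, at each newly added $3$-clique, that the adjacent facets demanded by the gluing criterion at $F$ are genuinely distinct from the new facet itself. We anticipate this to reduce in every case to the observation that the $3$-clique being added is missing from $\mC$, so any facet of $\mC$ witnessing an edge of it must differ from it. We also plan to invoke \cref{lem:cheatcheckshedding} so that the shedding property at $F$ can be checked once at the end of each phase rather than at every intermediate complex.
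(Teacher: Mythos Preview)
Your proposal is correct and follows essentially the same approach as the paper: induct, pick a shedding face $F$, first extend $\del_F\mC$ to $\full(\del_F\mC)=\del_F\full(\mC)$ using the inductive hypothesis while keeping $F$ shedding, then extend $\plk_F\mC$ to $\plk_F\full(\mC)$ (equivalently, extend the link). The only cosmetic differences are that the paper inducts on $|\skel(\mC)|$ rather than on the number of facets, and in Phase~2 the paper phrases the extension as adding edges to $\lk_F\mC$ to reach $\lk_F\full(\mC)$ while maintaining connectivity, whereas you phrase it as adding facets to $\plk_F$ directly; these amount to the same thing via \cref{lem:pad}. Your stated ``main obstacle'' about distinctness is not actually an obstacle: the condition $H\cap H'=H\setminus\{f\}$ already forces $f\notin H'$, hence $H'\neq H$, so no extra care is needed there.
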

\begin{proof}
 We are going to prove the claim by induction on $|\skel(\mC)|$. The base case of $|\skel(\mC)|=3$ is when we have exactly one facet in $\mC$, where the argument is trivial. For the sake of induction, assume that we have the claim for complexes with a skeleton smaller than $\mC$.

 Let $F$ denote the shedding face of $\mC$. The induction hypothesis tells us that we can extend $\del_F{\mC}$ to $\full(\del_F{\mC})$ one facet at a time while maintaining $1$-decomposability. We start by showing that $F$ is also a shedding face for $\full(\mC)$. We perform a case-by-case analysis based on the dimension of $F$.

 When $F = w$, take any facet $wab$ in $\plk_F{\full(\mC)}$. Since $ab$ is an edge of $\skel(\mC)$, it is also contained in $\skel(\del_F{\mC})$ due to \cref{lem:skeldecompanal}. This implies that there is some facet in $\del_F{\mC}$ that contains $ab$, and that facet is adjacent to $wab$.
 
 When $F=ab$, take any facet $abx$ in $\plk_F{\full(\mC)}$. Since $ax,bx$ are the edges in $\skel(\mC)$, they are also in $\skel(\del_F{\mC})$ due to \cref{lem:skeldecompanal}. This implies that there is some facet in $\del_F(\mC)$ that contains $ax$ and some facet that contains $bx$, and they are adjacent to $abx$. 

 As the facets of $\del_F{\mC}$ are also facets in $\full(\mC)$, we have shown that $\plk_F{\full(\mC)}$ glues nicely in $\full(\mC)$ along $F$. Then \cref{lem:shedd} tells us that $F$ is a shedding face of $\full(\mC)$. Moreover, from \cref{lem:cheatcheckshedding}, as we extend from $\mC$ to $\full(\mC)$ one facet at a time, $F$ stays as a shedding face throughout this process as long as we add facets not containing $F$ first.

As $F$ is a shedding face of $\full(\mC)$, we get $\full(\del_F{\mC}) = \del_F{\full(\mC)}$ due to \cref{lem:skeldecompanal}. Therefore, after extending $\del_F{\mC}$ to $\full(\del_F{\mC})$, our goal now is to add facets to $\plk_F{\mC}$ to extend it to $\plk_F{\full(\mC)}$. From \cref{lem:pad}, we can instead focus on extending $\lk_F{\mC}$ to $\lk_F{\full(\mC)}$. We again do a case-by-case analysis on the dimension of $F$.

When $F = w$, due to \cref{lem:skelconn} and \cref{lem:skeldecompanal}, we have that $\lk_F{\mC}$ is a connected subgraph of a connected graph $\lk_F{\full(\mC)}$. We can add edges one by one to extend $\lk_F{\mC}$ to $\lk_F{\full(\mC)}$, while maintaining the connectivity of the graph (and $1$-decomposability as well). 

When $F=ab$, we have that $\lk_F{\mC}$ is $0$-dimensional. We can extend $\lk_F{\mC}$ to $\lk_F(\full(\mC))$ adding vertices one at a time.

We have shown that we can extend $\del_F{\mC}$ to $\full(\del_F{\mC}) = \del_F{\full(\mC)}$ incrementally adding one facet at a time, while keeping it $1$-decomposable. Then we can extend $\plk_F{\mC}$ to $\plk_F{\full(\mC)}$ one facet at a time, while keeping it $1$-decomposable. We have also shown that $F$ is a shedding face throughout this process. This finishes the proof.
\end{proof}

In the following, we provide an example that illustrates the process used in the proof of \cref{lem:2dfull}.

\begin{example}
    Consider the 1-decomposable simplicial complex \[\mC = \langle 124, 134, 235, 245, 345\rangle.\] We have a shedding face $F = 1$.  We have \[\full(\mC) = \langle 124,134, 235, 245, 345, 234, 123\rangle.\] 
    First extend $\del_F{\mC} = \langle 235,245,345 \rangle$ to $\full(\del_F{\mC})$ by adding the facet $234$.
    
    Then extend $\lk_F{\mC} = \langle 24,34 \rangle$ to $\lk_F{\full(\mC)}$ by adding the facet (edge) $23$.

    We end up adding the facets in order $234,123$.
\end{example}

The approach we used in the proof of \cref{lem:2dfull} is essentially the following: we extend the deletion first until we cannot anymore (while maintaining the skeleton), then extend the remaining parts including the star again until we cannot anymore. This induction-based technique will be used in the vast majority of our proofs throughout the paper.


\section{Main result}

In this section, we will state and prove our main result. The results in the previous section on lower-dimensional complexes will be used as a base case in a lot of the induction-based proofs that we carry out in this section.

We start with the following definition.
\begin{definition}
\label{def:coned}
For $d \geq 2$, we say that a pure $d$-dimensional complex $\mC$ is \newword{fully coned with respect to $H$}, where $H$ is a subset of the vertex set, if it satisfies the following properties:
\begin{itemize}
    \item for each $h \in H$, $h$ is adjacent to every other vertex in $\skel(\mC)$,
    \item any $(d+1)$-clique in $\skel(\mC)$ is a facet of $\mC$.
\end{itemize}
\end{definition}

When $H = \emptyset$, saying that $\mC$ is fully coned with respect to $\emptyset$ is the same as saying that $\mC = \full(\mC)$. When $H$ is large enough, we get the following property. 

\begin{lemma}
\label{lem:lowcliquecontain}
For $d \geq 2$, let $\mC$ be a pure $d$-dimensional complex that is fully coned with respect to $H$ where $|H| \geq d-2$. Then, for any $d$-clique in $\skel(\mC)$ there is some facet of $\mC$ that contains it.
\end{lemma}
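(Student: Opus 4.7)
The plan is to reduce everything to producing a $(d+1)$-clique in $\skel(\mC)$ that contains the given $d$-clique $K$, since by the second defining property of \cref{def:coned} every such $(d+1)$-clique is automatically a facet of $\mC$. The argument then splits on whether $H \subseteq K$.

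If $H \not\subseteq K$, I would simply pick any $h \in H \setminus K$. The first full-coning property says $h$ is adjacent to every other vertex of $\skel(\mC)$, and in particular to every vertex of $K$. Hence $K \cup \{h\}$ is a $(d+1)$-clique of $\skel(\mC)$, and therefore a facet.

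The substantive case is $H \subseteq K$. Here the hypothesis $|H| \geq d - 2$ gives $|K \setminus H| \leq 2$, so I can choose two distinct vertices $a, b \in K$ with $K \setminus \{a, b\} \subseteq H$ (any pair that covers $K \setminus H$ works; such a pair exists as $d \geq 2$). Since $K$ is a $d$-clique, the edge $ab$ lies in $\skel(\mC)$ and therefore sits inside some facet $F$ of $\mC$ with $|F| = d + 1$. Because $|F| > |K|$, there is some $w \in F \setminus K$. This $w$ is adjacent to $a$ and $b$, since all three lie in $F$. For any $v \in K \setminus \{a, b\}$, the containment $v \in H$ together with $w \notin H$ (note that $w \notin K \supseteq H$) and the first full-coning property force $v$ and $w$ to be adjacent. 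Thus $K \cup \{w\}$ is a $(d+1)$-clique, and hence a facet of $\mC$.

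I don't anticipate any real obstacle. The only conceptual point is recognizing that the bound $|H| \geq d - 2$ is calibrated precisely so that two well-chosen vertices of $K$ can absorb the non-coned part of $K$; once that is noticed, the edge $ab$ (which sits in a facet by purity) supplies a witness $w$ that is automatically adjacent to the rest of $K$ by full coning.
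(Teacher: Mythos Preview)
Your proposal is correct and follows essentially the same approach as the paper: split on whether $H \subseteq K$, and in the harder case use a facet containing the small non-$H$ part of $K$ to produce a vertex $w$ that extends $K$ to a $(d+1)$-clique. The only cosmetic difference is that the paper works directly with the set $F = K \setminus H$ (which may have $0$, $1$, or $2$ elements), whereas you always pad this set to a pair $\{a,b\}$ and use the edge $ab$; also, your remark that $w \notin H$ is true but unnecessary, since $v \in H$ alone already forces $v$ to be adjacent to every other vertex, including $w$.
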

\begin{proof}
If our $d$-clique does not contain all the vertices of $H$, we can simply attach a missing vertex from $H$ to get a $(d+1)$-clique, which is a facet in $\mC$ due to the definition of a fully coned complex.

In the case where our $d$-clique contains all vertices of $H$, it is of the form $H \cup F$ where $H \cap F = \emptyset$ and $|F| \leq 2$. As $F$ is a face of $\skel(\mC)$, there is some facet in $\mC$, and a $(d+1)$-clique corresponding to it in $\skel(\mC)$, which contains $F$. We can find some $w$ in this clique not in $H \cup F$, and $H \cup F \cup \{w\}$ is a $(d+1)$-clique in $\skel(\mC)$, which is a facet in $\mC$ due to the definition of a fully coned complex.
\end{proof}

On a related note, we show an important property that fully coned graphs, defined below, satisfy. The reason we define $G^H$ separately below instead of using the above framework for $1$-dimensional cases is because we want to allow isolated vertices in $G$, which makes $G$ not a pure simplicial complex.

\begin{lemma}
\label{lem:arbgraphcone}
Let $G$ be an arbitrary graph on vertex set $V$, potentially with isolated vertices. When $H$ is a set of cardinality at least $d$ and $H \cap V = \emptyset$, we let $G^H$ denote the graph we get by connecting each vertex of $H$ to every other vertex of $H \cup V$. Then $\full^{d}(G^H)$ is $1$-decomposable.
\end{lemma}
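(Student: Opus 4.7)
The plan is to proceed by a double induction, with primary induction on $d$ and secondary induction on $|V|$. In the main recursive step we use any vertex $v \in V$ as a shedding face. The two key structural identifications are that $\del_v \full^d(G^H) = \full^d((G - v)^H)$, since removing $v$ from $G^H$ yields exactly $(G - v)^H$, and that $\lk_v \full^d(G^H) = \full^{d-1}((G[N_G(v)])^H)$, since the neighborhood of $v$ in $G^H$ equals $H$ together with the $G$-neighbors of $v$, and the vertices of $H$ remain connected to everything in the induced subgraph.

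For the primary base case $d = 1$: since $|H| \geq 1$, fix any $h \in H$; then $h$ is adjacent to every other vertex of $G^H$, so $G^H$ has no isolated vertices and is connected, and $\full^1(G^H) = G^H$ is $1$-decomposable by \cref{lem:1dconn}. For the secondary base $|V| = 0$, the graph $G^H$ is the complete graph on $H$, so $\full^d(G^H)$ is either void (if $|H| = d$), a single $d$-simplex (if $|H| = d+1$), or the $d$-skeleton of a simplex (if $|H| \geq d+2$); all are $1$-decomposable, the last by the classical result that skeleta of simplices are vertex decomposable.

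In the inductive step ($d \geq 2$, $|V| \geq 1$), pick $v \in V$ and verify the gluing criterion \cref{lem:shedd}. Any facet of $\full^d(G^H)$ containing $v$ has the form $\{v\} \cup K$ for some $d$-clique $K$ of $G^H$ with $v \notin K$, and we need an adjacent facet $\{w\} \cup K$ with $w \neq v$, $w \notin K$, and $w$ adjacent in $G^H$ to all of $K$. If $H \not\subseteq K$, any $w \in H \setminus K$ works since hub vertices are adjacent to everything. Otherwise $H \subseteq K$ forces $|H| = d$ and $K = H$, and we pick $w \in V \setminus \{v\}$, which is still adjacent to all of $H$ in $G^H$; this branch requires $|V| \geq 2$. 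The main obstacle is the degenerate edge case $|V| = 1$ and $|H| = d$, where no shedding witness exists inside $H$ or $V$; this must be handled separately, observing that there $G^H = K_{d+1}$ and $\full^d(G^H)$ is already a single $d$-simplex, hence $1$-decomposable with no appeal to the gluing criterion. Once the gluing criterion is verified, the deletion is $1$-decomposable by the secondary hypothesis (strictly smaller $|V|$), and the link is $1$-decomposable by the primary hypothesis at dimension $d-1$ (valid since $|H| \geq d \geq d - 1$), completing the induction.
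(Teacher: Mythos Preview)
Your proposal is correct and follows essentially the same approach as the paper: a double induction on $d$ and $|V|$, using any $v\in V$ as a shedding vertex with the same two structural identifications for the deletion and the link. The only cosmetic difference is that the paper takes $|V|=1$ as the secondary base case (where $\full^d(G^H)$ is a $d$-skeleton of a simplex on $|H|+1$ vertices), which absorbs the degenerate case you had to isolate; your choice of $|V|=0$ plus the separate treatment of $|V|=1$, $|H|=d$ achieves the same end.
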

\begin{proof}
We will first use induction on the dimension. The base case of $d=1$ is trivial, since $G^H$ is a connected graph and we can use \cref{lem:1dconn}.  Hence, we may assume for the sake of induction that our claim holds for smaller $d$'s.

Next, we use induction on $|V|$. The base case of $|V|=1$ is trivial: $\full^{d}(G^H)$ is a $d$-skeleton of a simplex on $|H|+1$ vertices. Now assume for the sake of induction that our claim holds for smaller $V$'s.

Take any vertex $v \in V$. We first show that $v$ is a shedding face in $\full^{d}(G^H)$. Take any facet $v c_1 \dots c_{d}$ of $\plk_v{\full^{d}(G^H)}$. If this facet contains $H$, then we can simply replace $v$ with any other vertex in $V$ to obtain an adjacent facet in $\full^{d}(G^H)$. If not, then we can replace $v$ with any unused vertex of $H$, to obtain an adjacent facet in $\full^{d}(G^H)$. From \cref{lem:shedd}, we can see that $v$ is a shedding face.

Use $G_1$ to denote the graph obtained from $G$ by deleting the vertex $v$. Since $v$ is a shedding face, from \cref{lem:skeldecompanal}, we get $\del_v{\full^{d}(G^H)} =  \full^{d}(G_1^H)$. It follows from the induction hypothesis that $\full^{d}(G_1^H)$ is $1$-decomposable, since it has a smaller vertex set.

 Looking at $\lk_v{\full^{d}(G^H)}$, since all vertices of $H$ are adjacent to $v$, we have $$\lk_v{\full^{d}(G^H)} = \full^{d-1}(G_2^H),$$ where $G_2$ is obtained from $G$ by taking the induced subgraph on vertices that are adjacent to $v$ (i.e. we are taking the link of $v$ in $G$). The complex $\full^{d-1}(G_2^H)$ is $1$-decomposable due to the induction hypothesis since it has a lower dimension.

We have shown that $v$ is a shedding face and that both $\del_v{\full^{d}(G^H)}$ and $\lk_v{\full^{d}(G^H)}$ are $1$-decomposable. This concludes the proof of the claim.
\end{proof}

We look at an example below.

\begin{example}
Take $G$ on vertex set $[5] = \{1,\dots,5\}$ with edges $12$ and $34$ (so $5$ is an isolated vertex). Let $H = \{a,b\}$. Then $$\full^{2}(G^H) = \langle ab1,ab2,ab3,ab4,ab5,a12,b12,a34,b34 \rangle$$ is $1$-decomposable, and we can use any vertex of $[5]$ as a shedding face.    
\end{example}

We go over the high-dimensional version of \cref{lem:2dk3ext}. 

\begin{lemma}
\label{lem:cliquemissedgeextend}
For $d \geq 2$, let $\mC$ be a pure $d$-dimensional $1$-decomposable complex that is fully coned with respect to $H$ such that $|H| \geq d-2$. Let $e$ be an edge missing from $\skel(\mC)$ such that, once added to $\skel(\mC)$, it creates a new $(d+1)$-clique. Then we can extend $\mC$ one facet at a time to a fully coned complex with respect to $H$ with skeleton $\skel(\mC) + e$ while maintaining the property of being $1$-decomposable.
\end{lemma}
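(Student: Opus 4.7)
The plan is to let $e = ab$ and use it as a shedding face throughout the incremental extension. Observe that every new $(d+1)$-clique in $\skel(\mC) + e$ not already in $\skel(\mC)$ must contain both $a$ and $b$, so such a clique has the form $\{a, b\} \cup L$ where $L$ is a $(d-1)$-clique in the induced subgraph $G_W$ of $\skel(\mC)$ on $W$, the set of vertices adjacent in $\skel(\mC)$ to both $a$ and $b$. The new facets to add are exactly these cliques, and the goal is to order them so that $1$-decomposability is preserved at each step.

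First I would verify that $e$ is a shedding face of the final complex $\mC'$, obtained from $\mC$ by adding every new $(d+1)$-clique containing $e$ as a facet. Given any facet $\{a,b\} \cup L \in \plk_e{\mC'}$ and $f = a$, the set $\{b\} \cup L$ is a $d$-clique in $\skel(\mC)$, so \cref{lem:lowcliquecontain} (applicable since $|H| \geq d-2$) yields a facet of $\mC$ containing it. That facet cannot contain $a$ because $ab \notin \skel(\mC)$, so it is adjacent to $\{a,b\} \cup L$. The case $f = b$ is symmetric, so \cref{lem:shedd} gives the claim.

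Next I would identify the link: since no original facet of $\mC$ contains $e$, the facets of $\lk_e{\mC'}$ are exactly the $(d-1)$-cliques of $G_W$, so $\lk_e{\mC'} = \full^{d-2}(G_W)$. The hypothesis $ab \notin \skel(\mC)$ forces $a, b \notin H$, hence $H \subseteq W$ and each vertex of $H$ is adjacent in $G_W$ to every other vertex of $W$. Writing $G_W = G_0^H$ with $G_0$ the induced subgraph of $\skel(\mC)$ on $W \setminus H$, \cref{lem:arbgraphcone} applied at dimension $d-2$ (valid when $d \geq 3$ since $|H| \geq d-2$) yields that $\full^{d-2}(G_W)$ is $1$-decomposable. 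In the base case $d = 2$, the link is $0$-dimensional and nonempty (a new $3$-clique exists by hypothesis), which is $1$-decomposable directly.

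Finally I apply \cref{lem:1decordering} to order the facets of $\full^{d-2}(G_W)$ as $L_1, \ldots, L_q$ so that each prefix is $1$-decomposable, and add $N_i := \{a, b\} \cup L_i$ to $\mC$ in this order. At each intermediate complex $\mC_i := \mC + \langle N_1, \ldots, N_i \rangle$, the face $e$ remains a shedding face by \cref{lem:cheatcheckshedding}, $\del_e{\mC_i} = \mC$ is $1$-decomposable by hypothesis, and $\lk_e{\mC_i} = \langle L_1, \ldots, L_i \rangle$ is $1$-decomposable by the choice of ordering; hence $\mC_i$ is $1$-decomposable. The final $\mC_q$ has skeleton $\skel(\mC) + e$ and remains fully coned with respect to $H$ by construction. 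The main subtlety will be the identification of $\lk_e{\mC'}$ as $\full^{d-2}(G_W)$ together with the observation that $H$ furnishes exactly the coning set required by \cref{lem:arbgraphcone}.
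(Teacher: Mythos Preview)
Your proposal is correct and follows essentially the same route as the paper: use $e=ab$ as the shedding face throughout, invoke \cref{lem:lowcliquecontain} to verify the gluing criterion, identify the link of $e$ with $\full^{d-2}$ of the induced subgraph on the common neighbors of $a$ and $b$, apply \cref{lem:arbgraphcone} (with $H$ as the coning set) to see this link is $1$-decomposable, and then pull back an ordering via \cref{lem:1decordering} and \cref{lem:cheatcheckshedding}. The paper frames this as an induction on $d$ with the base $d=2$ delegated to \cref{lem:2dk3ext} and \cref{lem:2dfull}, but the induction hypothesis is never actually invoked in the step; your direct argument, handling $d=2$ by noting the link is $0$-dimensional, is equivalent and arguably cleaner.
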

\begin{proof}
We induct on $d$. The base case where $d=2$ follows from \cref{lem:2dk3ext} and \cref{lem:2dfull}. Now assume for the sake of induction that the claim holds for smaller $d$ than what we have now.

Let $\mD$ denote the fully coned complex with respect to $H$, that has skeleton equal to $\skel(\mC) + e$. The facets that we are trying to add to $\mC$ to reach $\mD$ are the $(d+1)$-cliques in $\skel(\mC) + e$ that use $e=ab$. There are two things that we need to show: first, we can use $e$ as the shedding face throughout this process, and second, we can order the facets in $\mD \setminus \mC$ as $F_1,\dots,F_t$ so that $\langle F_1,\dots, F_i \rangle$ is $1$-decomposable for each $1 \leq i \leq t$.

We first show that $e$ is a shedding face of $\mD$. Let $c_1\dots c_{d-1} ab$ denote the $(d+1)$-clique in $\skel(\mC) + e$. Then $c_1\dots c_{d-1} a$ and $c_1 \dots c_{d-1} b$ are $d$-cliques in $\skel(\mC)$, so for each of them we have a facet in $\mC$ that contains it due to \cref{lem:lowcliquecontain}, and those facets are adjacent to $c_1\dots c_{d-1} ab$. From \cref{lem:shedd} and \cref{lem:cheatcheckshedding}, we can see that $e$ is a shedding face throughout the process of extending $\mC$ to $\mD$.

Next, we show that $\langle F_1,\dots,F_t \rangle$ forms a $1$-decomposable complex. We are looking at an induced subgraph of $\skel(\mC)$ on the set of vertices adjacent to both $a$ and $b$, and taking all the $(d-1)$-cliques and attaching $ab$ to them to get all the facets in $\mD \setminus \mC$. This is a $1$-decomposable complex due to \cref{lem:arbgraphcone} and \cref{lem:pad}. From this we have an ordering of the facets $F_1,\dots,F_t$ that we desire due to \cref{lem:1decordering}.
\end{proof}

We now go over the high-dimensional version of \cref{lem:2dfull}. The proof will mirror that of \cref{lem:2dfull}, with $\cocl{\mC}$ (defined below) instead playing the role of $\full(\mC)$.

\begin{proposition}
\label{prop:1dectococl}
For $d \geq 2$, consider $\mC$ a pure $d$-dimensional $1$-decomposable complex on vertex set $V$ and $H$ a set of vertices of size $\geq d-2$ where $\mC$ is disjoint from $H$ ($V \cap H = \emptyset$). It is possible to extend $\mC$ to $\cocl{\mC} := \full^{d}((\skel(\mC))^H)$ incrementally by adding one facet at a time, while ensuring that it remains $1$-decomposable.
\end{proposition}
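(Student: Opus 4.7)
The plan is to adapt the strategy of \cref{lem:2dfull} to higher dimensions, with $\cocl{\mC}$ playing the role of $\full(\mC)$, via a nested induction: outer induction on the dimension $d$, and inner induction on $|\skel(\mC)|$ for each fixed $d$. The outer base case $d=2$ combines \cref{lem:2dfull} (to first extend $\mC$ to $\full^2(\mC)$) with an iterative addition of the vertices of $H$: for each $h \in H$ in turn, attach triangles of the form $h v_1 v_2$ as $v_1 v_2$ ranges over edges of the current skeleton, ordered consistently with a $1$-decomposition of that skeleton as a $1$-dimensional complex; the shedding face at each step is $h$, and \cref{lem:1dconn} ensures that the link of $h$ remains $1$-decomposable. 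The inner base case, when $\mC$ is a single simplex, reduces to extending a simplex to the $d$-skeleton of a larger simplex, which is handled by the $0$-decomposable extendability result of \cite{cdgo22}.

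For the inner inductive step, let $F$ be a shedding face of $\mC$. In \emph{Phase I}, I would use the inner inductive hypothesis on $\del_F{\mC}$ (which has strictly smaller skeleton) to extend it to $\cocl{\del_F{\mC}}$ while preserving $1$-decomposability; the identity $\cocl{\del_F{\mC}} = \del_F{\cocl{\mC}}$ follows from \cref{lem:skeldecompanal} and the shedding property of $F$. In \emph{Phase II}, I would verify that $F$ is a shedding face of $\cocl{\mC}$, and hence of every intermediate complex by \cref{lem:cheatcheckshedding}, provided Phase I adds the $F$-avoiding facets first: given any facet $G \in \plk_F{\cocl{\mC}}$ and any $f \in F$, one must exhibit a facet of $\cocl{\mC}$ containing $G \setminus \{f\}$ but not $f$. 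If $H \not\subseteq G \setminus \{f\}$, then pick an unused vertex $h \in H$ and append it; otherwise, invoke the purity of $\mC$ together with $|H| \geq d-2$ to locate another common neighbor of the $V$-part of $G \setminus \{f\}$ in $V \setminus \{f\}$, coming from a facet of $\mC$ containing the relevant edge. Then conclude via \cref{lem:shedd}.

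In \emph{Phase III}, I would extend $\plk_F{\mC}$ to $\plk_F{\cocl{\mC}}$; by \cref{lem:pad} this reduces to extending $\lk_F{\mC}$ to $\lk_F{\cocl{\mC}}$, a task of dimension $d - |F| < d$. This splits into (a) extending $\lk_F{\mC}$ to $\cocl{\lk_F{\mC}}$ via the outer inductive hypothesis on dimension, and (b) bridging the gap $\lk_F{\cocl{\mC}} \supsetneq \cocl{\lk_F{\mC}}$ that can arise because $\skel(\lk_F{\mC})$ may be a proper subgraph of the induced subgraph of $\skel(\mC)$ on the common neighborhood of $F$; in (b), add each missing edge one at a time, which creates a new $(d-|F|+1)$-clique thanks to $|H| \geq d-|F|-1$, and apply \cref{lem:cliquemissedgeextend}.

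The main obstacle I expect is Phase II, specifically the subcase where $G \setminus \{f\}$ contains all of $H$ and where one must invoke the purity of $\mC$ to produce an additional common neighbor of the $V$-part of $G \setminus \{f\}$ distinct from $f$. A secondary difficulty is handling the low-dimensional boundary cases in Phase III.b (when $d - |F| \in \{0,1\}$), where \cref{lem:cliquemissedgeextend} does not apply directly and one must instead appeal to \cref{lem:1dconn} or direct vertex-addition arguments.
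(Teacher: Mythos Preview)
Your overall architecture matches the paper's proof: nested induction on $d$ and then on $|\skel(\mC)|$, extend $\del_F{\mC}$ first, verify $F$ remains shedding, then extend the link. Two remarks.

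First, your ``main obstacle'' in Phase~II is not an obstacle. The paper dispatches it in one line by citing \cref{lem:lowcliquecontain}: the set $G\setminus\{f\}$ is a $d$-clique in $\skel(\cocl{\del_F{\mC}})$ (by \cref{lem:skeldecompanal}), and since $\cocl{\del_F{\mC}}$ is fully coned with respect to $H$ with $|H|\geq d-2$, that lemma supplies a facet of $\cocl{\del_F{\mC}}$ containing it. Your case split on whether $H\subseteq G\setminus\{f\}$ is exactly the proof of \cref{lem:lowcliquecontain}; you are re-deriving it rather than invoking it.

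Second, there is a genuine gap in your Phase~III.b when $|F|=2$, and it is not the low-dimensional issue you flag. When $F=ab$, the vertex set of $\lk_F{\mC}$ can be \emph{strictly smaller} than the set of common neighbors of $a$ and $b$ in $\skel(\mC)$: a vertex $v$ may satisfy $av,bv\in\skel(\mC)$ while no facet of $\mC$ contains $abv$ (this is precisely the asymmetry in \cref{lem:skeldecompanal}). Such a $v$ is a vertex of $\lk_F{\cocl{\mC}}$ but not of $\cocl{\lk_F{\mC}}$. Your plan ``add each missing edge one at a time via \cref{lem:cliquemissedgeextend}'' cannot introduce $v$: since $v$ is absent from the current skeleton, any edge incident to $v$ creates no clique of size $\geq 3$, so the hypothesis of \cref{lem:cliquemissedgeextend} fails. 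This occurs for every $d\geq 3$, not only when $d-|F|\in\{0,1\}$. The paper's fix is to first add, for each such missing vertex $v$, all facets $\{v\}\cup H'$ with $H'$ ranging over the $(d-2)$-subsets of $H$, using $v$ itself as the shedding vertex; only after all missing vertices are present does the edge-filling via \cref{lem:cliquemissedgeextend} proceed.
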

\begin{proof}
We are going to prove the claim by induction. We will first induct on the dimension. The base case of $d=2$ comes from \cref{lem:2dfull}. For sake of induction assume we have the claim for complexes that have dimension less than $d$.

Then we use induction on the size of $\skel(\mC)$. The base case is when $|\skel(\mC)| = {d+1 \choose 2}$: our complex consists only of one facet. Using Theorem 2.9 of \cite{cdgo22}, we can extend this to the $d$-dimensional skeleton of a simplex on $d+1 + |H|$ vertices adding one facet at a time, while maintaining $0$-decomposability. Now assume for the sake of induction that we have the claim for complexes with a skeleton 
smaller than $\mC$.

Let $F$ denote the shedding face of $\mC$. Then we can extend $\del_F{\mC}$ to $\cocl{(\del_F{\mC})}$ one facet at a time while maintaining $1$-decomposability due to the induction hypothesis since $\del_F{\mC}$ has a smaller skeleton. All facets of $\cocl{(\del_F{\mC})}$ are facets in $\cocl{\mC}$ as well. We start by showing that $F$ is a shedding face for $\cocl{\mC}$. We do a case-by-case analysis on dimension of $F$.


When $F = w$, take any facet $w c_1 \dots c_{d}$ in $\plk_F{\cocl{\mC}}$. Then $c_1 \dots c_d$ is a $d$-clique in $\skel(\cocl{(\del_F{\mC})})$, so there is a facet containing it in $\cocl{(\del_F{\mC})}$ from \cref{lem:lowcliquecontain}, and hence in $\cocl{\mC}$ as well. 


When \( F = ab \), take any facet \( abc_1 \dots c_{d-1} \) in \( \plk_F{\cocl{\mC}} \). Then \( ac_1 \dots c_{d-1} \) is a \( d \)-clique in \( \skel(\cocl{(\del_F{\mC})}) \), so there is a facet containing it in \( \cocl{(\del_F{\mC})} \) from \cref{lem:lowcliquecontain}, and hence in \( \cocl{\mC} \). The same reasoning applies to \( bc_1 \dots c_{d-1} \).


 From \cref{lem:shedd}, we obtain that $F$ is a shedding face of $\cocl{\mC}$. Moreover, from \cref{lem:cheatcheckshedding}, as we extend from $\mC$ to $\cocl{\mC}$ one facet at a time, $F$ stays as a shedding face throughout this process as long as we add facets not containing $F$ first.

As $F$ is a shedding face of $\cocl{\mC}$, we get $\cocl{(\del_F{\mC})} = \del_F{\cocl{\mC}}$ due to \cref{lem:skeldecompanal}. Therefore, after extending $\del_F{\mC}$ to $\cocl{(\del_F{\mC})}$, our goal now is to add facets to $\plk_F{\mC}$ to extend it to $\plk_F{\cocl{\mC}}$. From \cref{lem:pad}, we can instead focus on extending $\lk_F{\mC}$ to $\lk_F{\cocl{\mC}}$. We use the induction hypothesis to extend $\lk_F{\mC}$ to $\cocl{(\lk_F{\mC})}$, since it has a lower dimension. For extending $\cocl{(\lk_F{\mC})}$ to $\lk_F{\cocl{\mC}}$, we again do a case-by-case analysis on the dimension of $F$.

When $F = w$, from \cref{lem:skelconn} and \cref{lem:skeldecompanal}, the graph $\skel(\cocl{(\lk_F{\mC})})$ is a connected subgraph of connected graph $\skel(\lk_F{\cocl{\mC}})$ with the same vertex set. Any missing edge, if put in, would form a $d$-clique with vertices of $H$ since each vertex of $H$ is adjacent to all other vertices. We use \cref{lem:cliquemissedgeextend} to extend, adding all such edges to the skeleton, which will allow us to reach $\lk_F{\cocl{\mC}}$ while maintaining $1$-decomposability.

When $F=w_1w_2$, we have that $\skel(\cocl{(\lk_F{\mC})})$ is a connected subgraph inside a connected graph $\skel(\lk_F{\cocl{\mC}})$ from \cref{lem:skelconn} and \cref{lem:skeldecompanal} (the vertex set might be strictly smaller). For each missing vertex $v$, we add the facets $\{v\} \cup H'$, where $H'$ ranges over all subsets of $H$ of size $d-2$, using $v$ as the shedding vertex, adding $v$ to our skeleton. After that, we again use \cref{lem:cliquemissedgeextend} repeatedly to fill in all the missing edges in the skeleton (the edge, together with vertices of $H$, would form a $(d-1)$-clique) to reach $\lk_F{\cocl{\mC}}$ while maintaining $1$-decomposability.

We have shown that we can first extend $\del_F{\mC}$ to $\cocl{(\del_F{\mC})}$ incrementally, adding one facet at a time while keeping it $1$-decomposable. Then we extend $\plk_F{\mC}$ to $\plk_F{\cocl{\mC}}$ one facet at a time while maintaining $1$-decomposability. We have also shown that $F$ is a shedding face throughout this process. This finishes the proof of the claim.
\end{proof}

The following example illustrates the procedure used in the proof of \cref{prop:1dectococl}.

\begin{example}
\label{ex:fullyconeex1}
Consider the complex 
\[
\mC = \langle 1234, 1245, 1345, 2345, 1246, 1256, 1237, 1247, 1347, 3457 \rangle,
\]
with shedding face \(23\). We will use $H = \{8\}$. We have 
\[
\del_{23} \mC = \langle 1245, 1345, 1246, 1256, 1247, 1347, 3457 \rangle.
\]
Then 
\[
\cocl{\del_{23} \mC} =
\begin{aligned}[t]
\langle &1245, 1246, 1247, 1248, 1256, 1257, 1258, 1268, 1278, 1345, 1347, 1348,\\
  &1357, 1358, 1378, 1456, 1457, 1458, 1468, 1478, 1568, 1578,\\
  &2456, 2457, 2458, 2468, 2478, 2568, 2578, 3457, 3458, 3478,\\
  &3578, 4568, 4578 \rangle.
\end{aligned}
\]
We have
\[
\lk_{23} \mC = \langle 14,45,17 \rangle.
\]
Then
\[
\cocl{(\lk_{23} \mC)} = \langle 14,45,17,18,48,58,78 \rangle.
\]
After that we extend it to (take edges where both endpoints are adjacent to $2$ and $3$ in $\skel(\mC)$)
\[
\langle 14,45,17,18,48,58,78,15,47,57 \rangle.
\]
At the end, we get
\[
\cocl{\mC} = \cocl{\del_{23} \mC} + \langle 1234,2345,1237,1238,2348,2358,2378,1235,2347,2357 \rangle.
\]

\end{example}

We next show how to extend the fully coned complexes obtained from \cref{prop:1dectococl}.


\begin{proposition}
\label{prop:fulltosimp}
Let $\mC$ be a pure $d$-dimensional $1$-decomposable complex that is fully coned with respect to $H$, on vertex set $[n] \cup H$, where $[n] \cap H = \emptyset$ and $|H| = d-2$. Moreover, we require that in the graph $\skel(\mC)$, its induced subgraph on $[n]$ be connected. Then we can extend $\cocl{\mC}$ to $\Delta_{n+d-3}^{(d)}$ one facet at a time while maintaining $1$-decomposability.
\end{proposition}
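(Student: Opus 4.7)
The target $\Delta_{n+d-3}^{(d)}$ is the full $d$-skeleton of the simplex on the $n+d-2$ vertices $[n] \cup H$. Because $\mC$ is already fully coned with respect to $H$, we have $\cocl{\mC} = \mC$: every vertex of $H$ is adjacent to every other vertex, and every $(d+1)$-clique of $\skel(\mC)$ is already a facet. Hence the only obstructions to equality with $\Delta_{n+d-3}^{(d)}$ are the edges within $[n]$ that are missing from $\skel(\mC)$, together with the new facets those edges generate. The plan is to add these missing edges one at a time using \cref{lem:cliquemissedgeextend}, which for each edge addition inserts all the new $(d+1)$-clique facets in an order that preserves $1$-decomposability and the fully-coned-with-respect-to-$H$ property.

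The key step is a scheduling lemma: as long as the induced subgraph of the current skeleton on $[n]$ is not complete, there is a missing edge $uv \subseteq [n]$ whose endpoints share a common neighbor $c$ in $[n]$. Granting this, $\{u,v\} \cup H \cup \{c\}$ has cardinality $d+1$ and all its pairs are edges of the current skeleton once $uv$ is added (the pairs involving $H$ are already present since $\mC$ is fully coned with respect to $H$), so the hypothesis of \cref{lem:cliquemissedgeextend} is met. To prove the scheduling lemma, take any missing pair $uv$ and a shortest $u$-$v$ path $u = x_0, x_1, \dots, x_k = v$ in the induced subgraph on $[n]$, which is connected by hypothesis and stays connected as we only add edges. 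If $k = 2$, then $x_1$ is a common $[n]$-neighbor of $u$ and $v$. If $k \geq 3$, then $u$ and $x_2$ are non-adjacent (else the path could be shortened) with common neighbor $x_1$, so reschedule to add $ux_2$ first. In either case, some available missing edge meets the hypothesis.

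After each edge addition, the resulting complex is still fully coned with respect to $H$: condition (i) of \cref{def:coned} is preserved because only edges are added, and condition (ii) holds because the $(d+1)$-cliques newly created are exactly the facets inserted by \cref{lem:cliquemissedgeextend}. Thus the invariants persist, the scheduling lemma may be reapplied, and the iteration continues until the induced graph on $[n]$ is complete, at which point the whole skeleton is $K_{n+d-2}$ and every $(d+1)$-clique is a facet, which is precisely $\Delta_{n+d-3}^{(d)}$. The main obstacle is the scheduling argument itself: one must ensure that the $(d+1)$-clique hypothesis of \cref{lem:cliquemissedgeextend} is always satisfiable, and the connectivity assumption on the induced subgraph on $[n]$ is used here in an essential way via the shortest-path trick.
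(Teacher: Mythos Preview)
Your proof is correct and follows essentially the same approach as the paper: reduce to the induced subgraph on $[n]$, and repeatedly find a missing edge whose endpoints have a common neighbor (equivalently, a pair at distance~$2$) so that \cref{lem:cliquemissedgeextend} applies. The paper phrases the scheduling step as ``if not all distances are $1$, some pair is at distance $2$,'' which is exactly your shortest-path argument; your observation that $\cocl{\mC}=\mC$ under the hypotheses is also implicit in the paper's setup.
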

\begin{proof}
When we add an edge $e$ missing from $\skel(\mC)$ that creates a new $(d+1)$-clique to $\skel(\mC)$, it maintains the property that the induced subgraph on $[n]$ is connected: this is because since $|H| = d-2$, the $(d+1)$-clique involves a $3$-clique on $[n]$. We are adding an edge that creates a new $3$-clique on $[n]$, so the induced subgraph on $[n]$ will still be connected after the addition of $e$.

From \cref{lem:cliquemissedgeextend}, it is enough to show the following claim: Given $\skel(\cocl{\mC})$, we can continue to add edges that are the unique missing edge of some $(d+1)$-clique and reach the complete graph on $[n] \cup H$.

Since $\mC$ is fully coned with respect to $H$, in $\skel(\mC)$, the vertices of $H$ are adjacent to every other vertex. Then it is enough to show the following claim on the induced subgraph on $[n]$: Given a connected graph on $n$ vertices, we can keep adding edges that are the unique missing edge of some $3$-clique, and we can reach the complete graph on $n$ vertices.

Given a connected graph, we can check the distance between any pair of vertices. If not all the distances are $1$, then there must exist a pair where the distance is $2$. Then we have a $3$-clique with a missing edge and we can fill that in. Repeating this process, we reach a graph where the distance between any pair of vertices is $1$, which is a complete graph.
\end{proof}

The example below illustrates the procedure used in the proof of \cref{prop:fulltosimp}.

\begin{example}
\label{ex:missingedgeextend}    
We consider the fully coned complex obtained in \cref{ex:fullyconeex1}:
\[
\mC = 
\begin{aligned}[t]
\langle &1234, 1235, 1237, 1238, 1245, 1246, 1247, 1248, 1256, 1257, 1258, 1268, 1278, \\
  &1345, 1347, 1348, 1357, 1358, 1378, 1456, 1457, 1458, 1468, 1478, 1568, 1578, \\
  &2345, 2347, 2348, 2357, 2358, 2378, 2456, 2457, 2458, 2468, 2478, 2568, 2578, \\
  &3457, 3458, 3478, 3578, 4568, 4578 \rangle.
\end{aligned}
\]

Its \(1\)-skeleton is the graph
\[
\begin{aligned}[t]
\langle &12, 13, 14, 15, 16, 17, 18, 23, 24, 25, 26, 27, 28, \\
  &34, 35, 37, 38, 45, 46, 47, 48, 56, 57, 58, 68, 78 \rangle.
\end{aligned}
\]
Notice that $67$ is missing and forms a $3$-clique together with $68$ and $78$. The newly added facets using $67$ from \cref{lem:cliquemissedgeextend} are
\[
\begin{aligned}
\langle 1267, 1467, 1567, 1678, 2467, 2567, 2678, 4567, 4678, 5678 \rangle.
\end{aligned}
\]
This gives us a fully coned complex on the new skeleton with the edge $67$ added. We can repeat this procedure, adding all the missing edges from $K_8$ in the skeleton, to reach $\Delta_7^{(3)}$.
\end{example}

Now we present and prove the main result of the paper.

\begin{theorem}
\label{thm:main}
Let $\mC$ be a pure $d$-dimensional $1$-decomposable complex with vertex set $[n]$. Then we can order the facets of $\Delta_{n+d-3}^{(d)} \setminus \mC = \{F_1,\dots,F_t\}$ so that $\mC + \langle F_1,\dots,F_i \rangle$ is $1$-decomposable for all $1 \leq i \leq t$.
\end{theorem}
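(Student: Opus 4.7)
The plan is to concatenate the orderings supplied by \cref{prop:1dectococl} and \cref{prop:fulltosimp}. (The case $d = 1$ is essentially handled by \cref{lem:1dconn} alone.) Assume $d \geq 2$, and set $H := \{n+1, \ldots, n+d-2\}$, a set of $d-2$ new vertices disjoint from $[n]$ (with $H = \emptyset$ when $d=2$). Since $|H| = d - 2$, \cref{prop:1dectococl} applies with this $H$ and extends $\mC$ to $\cocl{\mC} = \full^d((\skel(\mC))^H)$ one facet at a time while maintaining $1$-decomposability. This produces the first block of facets in the required ordering.

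Step two is to check that $\cocl{\mC}$ satisfies the hypotheses of \cref{prop:fulltosimp}. By construction it is a pure $d$-dimensional $1$-decomposable complex, its ambient vertex set is $[n] \cup H$, and $|H| = d-2$. It is fully coned with respect to $H$: every $(d+1)$-clique of $\skel(\cocl{\mC})$ is a fortiori a $(d+1)$-clique of $(\skel(\mC))^H$ and hence a facet of $\cocl{\mC}$ by definition; and for each $h \in H$ and any other vertex $x \in ([n] \cup H) \setminus \{h\}$, pick any facet $F$ of $\cocl{\mC}$ containing $x$ and replace some $y \in F \setminus \{x\}$ by $h$---since $h$ is adjacent to every other vertex in $(\skel(\mC))^H$, the result is still a $(d+1)$-clique in that graph, so it is a facet of $\cocl{\mC}$, exhibiting $hx$ as an edge of $\skel(\cocl{\mC})$. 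Finally, the induced subgraph of $\skel(\cocl{\mC})$ on $[n]$ contains $\skel(\mC)$, which is connected by \cref{lem:skelconn} since $\mC$ is $1$-decomposable hence shellable.

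With those hypotheses verified, \cref{prop:fulltosimp} extends $\cocl{\mC}$ to $\Delta_{n+d-3}^{(d)}$ one facet at a time while preserving $1$-decomposability, supplying the second block. Concatenating the two blocks produces the ordering of $\Delta_{n+d-3}^{(d)} \setminus \mC$ required by the theorem. The genuine work of the argument has already been absorbed by \cref{prop:1dectococl} and \cref{prop:fulltosimp}; the only thing I expect to need care here is the hypothesis check in step two---specifically the swap-a-vertex-for-$h$ observation that $\cocl{\mC}$ is genuinely fully coned with respect to $H$---but this is a direct consequence of how $(\skel(\mC))^H$ is built.
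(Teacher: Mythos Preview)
Your proposal is correct and follows exactly the paper's approach: apply \cref{prop:1dectococl} to reach $\cocl{\mC}$, then \cref{prop:fulltosimp} to reach $\Delta_{n+d-3}^{(d)}$. The paper's own proof is two sentences invoking these propositions; you have simply spelled out the hypothesis check for \cref{prop:fulltosimp} (fully coned with respect to $H$, and connectivity of the induced subgraph on $[n]$ via \cref{lem:skelconn}), which is sound.
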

\begin{proof}
We extend $\mC$ to $\cocl{\mC}$ using \cref{prop:1dectococl}. Then we extend it to $\Delta_{n+d-3}^{(d)}$ using \cref{prop:fulltosimp}.
\end{proof}

\section{Further questions}

In this section, we propose some questions on the topic.

Our main result \cref{thm:main} required the introduction of $d-2$ new vertices in order to be able to extend an arbitrary pure $d$-dimensional $1$-decomposable complex to the skeleton of a simplex. It is natural to ask if it is possible to do this without allowing the introduction of new vertices (or fewer vertices).

\begin{question}
Can we replace $\Delta_{n+d-3}^{(d)}$ with $\Delta_{n-1}^{(d)}$ in \cref{thm:main}?
\end{question}

The main obstacle when one tries to use a similar approach to what we have done in the paper is that the skeleton of link can be very small. As mentioned in \cref{lem:skeldecompanal}, the skeleton of the deletion is very close to the skeleton of the original complex, whereas for the skeleton of the link, we do not have much control on how big or small it can be compared to the skeleton of the original. 

This means that when we try to extend $\lk_F{\mC}$ to $\lk_F(\full(\mC))$, it is quite difficult. In this paper, we bypassed this issue by introducing several vertices to cone and use \cref{lem:cliquemissedgeextend}, which is essentially using \cref{lem:2dk3ext} due to the existence of $H$. However, we cannot apply such a method if we are not allowed to introduce new vertices.

In \cite{dnsvw24}, it was shown that a complex with a number of facets above a certain threshold is guaranteed to be $0$-decomposable. We could ask if one can come up with a similar threshold for $1$-decomposable complexes:

\begin{question}
Can we come up with a threshold based on the number of vertices and dimension of a $1$-decomposable complex, such that if the number of facets is above that threshold (lower than the threshold for $0$-decomposable), the complex is guaranteed to be $1$-decomposable?
\end{question}

Another natural question to ask is if we can use a similar method to tackle $k$-decomposable complexes.

\begin{question}
Let $\mC$ be a $d$-dimensional $k$-decomposable complex with vertex set $[n]$. Can we find some $\psi(d,k)$ such that we can order the facets of $\Delta_{n-1 + \psi(d,k)}^{(d)} \setminus \mC = \{F_1,\dots,F_t\}$ so that $\mC + \langle F_1,\dots,F_i \rangle$ is $k$-decomposable for all $1 \leq i \leq t$?
\end{question}

As mentioned in the Introduction, since $d$-decomposable is equivalent to shellable, this would be another direction towards resolving Simon's conjecture.


\subsection*{Acknowledgments}
This research was carried out primarily during the 2025 Honors Summer Math Camp at Texas State University. The authors appreciate the support from the camp and also thank Texas State University for providing support and a great working environment. The authors also thank the participants of the 2021 REU hosted at Texas State University---Russell Barnes, Anton Dochtermann, Fran Herr, Cece Henderson and Ethan Partida---for useful discussions that led to this project.

\section*{Data Availability Statement}
This manuscript does not include any experimental data. All data generated or analyzed during this study are included in this published article.

\section*{Conflict of Interest}
On behalf of all authors, the corresponding author states that there is no conflict of interest.

\printbibliography

\end{document}